\newcommand{\gc}{\chi}
\renewcommand{\ge}{\varepsilon}
\newcommand{\gl}{\lambda}
\newcommand{\gm}{\mu}
\newcommand{\gs}{\sigma}
\newcommand{\gt}{\tau}
\newcommand{\gG}{\Gamma}
\newcommand{\gL}{\Lambda}
\newcommand{\gS}{\Sigma}
\newcommand{\gX}{\Xi}
\newcommand{\bb}{\mathbb}
\newcommand{\mc}{\mathcal}
\newcommand{\overtie}[2]{             
\begin{aligned} \displaystyle         %
\operatornamewithlimits{              %
 \begin{aligned} #1                   %
 \end{aligned} }_{#2}                 %
\end{aligned} }
\newtheorem{theorem}{Theorem}[section]
\newtheorem{lemma}[theorem]{Lemma}
\theoremstyle{definition}
\newtheorem{definition}[theorem]{Definition}
\newtheorem{example}[theorem]{Example}
\newtheorem{proposition}[theorem]{Proposition}
\newtheorem{corollary}[theorem]{Corollary}
\theoremstyle{remark}
\newtheorem{remark}[theorem]{Remark}
\theoremstyle{problem}
\def\CC{\mathbb{C}}
\def\QQ{\mathbb{Q}}
\def\ZZ{\mathbb{Z}}
\newcommand{\bs}{\backslash}
\def\cal{\mathcal}  
\newcommand{\A}{{\cal A}}
\newcommand{\B}{{\cal B}}
\newcommand{\C}{{\cal C}}
\newcommand{\Ho}{{\cal H}}
\numberwithin{equation}{section}
\newcommand{\codim}{\operatorname{codim}}
\newcommand{\Hom}{\operatorname{Hom}}
\begin{document}

\title[Edge colored hypergraphic Arrangements]{Edge colored hypergraphic arrangements}


\author{Matthew Miller}
\address{Department of Mathematics, Bucknell University, Lewisburg, PA     ,USA}
\curraddr{}
\email{matthew.miller@bucknell.edu}

\author{Max Wakefield}
\address{Department of Mathematics, Hokkaido University, Sapporo 060-0810, Japan}
\curraddr{}
\email{wakefield@math.sci.hokudai.ac.jp}

\thanks{The second author has been supported by NSF grant \# 0600893
and the NSF Japan program.
}

\subjclass[]{}

\date{}

\dedicatory{}

\begin{abstract} A subspace arrangement defined by intersections of hyperplanes of the braid arrangement can be encoded by an edge colored hypergraph. It turns out that the characteristic polynomial of this type of subspace arrangement is given by a generalized chromatic polynomial of the associated edge colored hypergraph. The main result of this paper supplies a sufficient condition for the existence of non-trivial Massey products of the subspace arrangements complex complement. This is accomplished by studying a spectral sequence associated to the Lie coalgebras of Sinha and Walter. \end{abstract}

\maketitle



\section{Introduction}

Let $V$ be a complex vector space of dimension $\ell$. In this paper, a subspace arrangement $\A$ is a finite collection of affine subspaces of $V$ and we assume that there are no  inclusions between elements of $\A$. Let $L(\A)$ be the labeled intersection lattice of $\A$ defined by all possible non-empty intersections of elements from $\A$ ordered by reverse inclusion where the label is defined by codimension in $V$.  We call $M(\A ):=V\bs \bigcup\limits_{X\in \A} X$ the complement of $\A$. Choose $\{x_1,\ldots,x_\ell\}$ to be a basis for $V^*$. Let $\A_\ell$ be the arrangement of hyperplanes in $V$ defined by the linear forms $x_i-x_j$ for all $1\leq i<j\leq \ell$, which is called the braid arrangement or the Coxeter arrangement of type $\text{A}_\ell$. The intersection lattice $L(\A_\ell )$ of $\A_\ell$ is naturally isomorphic to the partition lattice. Following the terminology of Bj\"orner in \cite{Bj-Subspaces} we say that a subspace arrangement $\A$ is embedded in a hyperplane arrangement $\B$ if $\A\subseteq L(\B )$. The purpose of this paper is to begin studying combinatorial and topological properties of arbitrary subspace arrangements embedded in $\A_\ell$.

In the last thirty years a considerable amount of attention was given to certain types of subspace arrangements embedded in $\A_\ell$. These include graphic hyperplane arrangements, hypergraph arrangements or diagonal arrangements, orbit arrangements, and k-equal arrangements.  Their study involves many areas of mathematics, such as combinatorics, algebra, topology, and even computational complexity theory (see \cite{Bj-Subspaces}, \cite{BLY-k-equal}, \cite{BW95}, \cite{BS-Charac}, \cite{Koz97}, \cite{Koz-hyper}, \cite{LiLi-geners}, \cite{PRW-diag}, and \cite{Yuz-Small}). Even the most general of these, the class of hypergraphic arrangements, is a much smaller class than that of all possible subspace arrangements embedded in $\A_\ell$. We study this larger class of arrangements by associating an edge colored hypergraph to each such subspace arrangement.

The celebrated results of Goresky and MacPherson in \cite{GM88} concerning combinatorial formulas for the cohomology of the real complement of an arbitrary subspace arrangement led to the development and computation of topological data for many special families of subspace arrangements. In particular, Bj\"orner and Welker in \cite{BW95} give formulas for the Betti numbers and show some non-vanishing results for higher homotopy groups of the real and complex complements of $k$-equal arrangements. In \cite{Yuz-Small} Yuzvinsky lists generators and relations for the cohomology algebra for the complex complement of $k$-equal arrangements.

Recently, the question of formality of the complex complement of a subspace arrangement has received more attention. In \cite{FY-Formal} Feichtner and Yuzvinsky prove that the wonderful models of De Concini and Procesi from \cite{DCP95} are quasi-isomorphic to Yuzvinsky's relative atomic complexes from \cite{Yuz-Small}. Then they show that this relative atomic complex is a formal differential graded algebra when the subspace arrangements' intersection lattice is geometric. Next, Denham and Suciu in \cite{DS-Massey}, and Grbi\'c and Theriault in \cite{GT-supacehtpy} exhibited coordinate arrangements with non-formal complex complements. Both teams produced non-trivial Massey products in the rational cohomology rings of the complex complement by studying moment-angle complexes which were also shown to have non-trivial Massey products by Baskakov in \cite{Baskakov-03}.

One focus of this paper is to study the question of formality for the class of subspace arrangements embedded in the braid arrangement. Towards this aim we apply the recent work of Sinha and Walter \cite{SW06} to Yuzvinsky's relative atomic complex.  This allows us to use the edge colored hypergraphs to compute certain differentials in the spectral sequence of the associated Lie coalgebra.  These differentials provide non-trivial Massey triple products.

This paper is organized as follows. Given a subspace arrangement embedded in the braid arrangement we define an edge colored hypergraph generalizing hypergraphic arrangements in Section \ref{ech}. Also,  in Section \ref{ech} we develop basic combinatorial facts translating lattice theoretic properties into the language of edge colored hypergraphs. Then in Section \ref{Poly} we prove that the characteristic polynomial of such a subspace arrangement can be calculated by a certain type of vertex coloring of the associated edge colored hypergraph. Finally, in Section \ref{complement} we study the relative atomic complex of such subspace arrangements and exhibit non-trivial Massey products for certain subspace arrangements. We finish by examining the case of $k$-equal arrangements.

{\bf Acknowledgments.} The authors would like to thank Dev Sinha and Sergey Yuzvinsky for many helpful discussions. The authors are also thankful to Takuro Abe, Hiroaki Terao, and T\'ai Huy H\'a for useful suggestions.  This work was begun when the second author was visiting Bucknell University. The second author is grateful for the gracious support of the Bucknell Mathematics Department.  The authors would like to thank the referee for helping to clarify the exposition.


\section{Definitions and combinatorics}\label{ech}

In this section we define an edge colored hypergraph from a subspace arrangement embedded in the braid arrangement. Also, given an edge colored hypergraph we define a subspace arrangement embedded in the braid arrangement. These definitions generalize that of the classical graphic arrangements (see Orlik and Terao \cite{OT}) and hypergraph arrangements (see Bj\"orner \cite{Bj-Subspaces}, Kozlov \cite{Koz-hyper}, Hultman \cite{Hult-link}). Then we study combinatorial properties of these arrangements and compute the codimensions of the subspaces using the associated edge colored hypergraph. We conclude the section with a hypergraphical interpretation of a geometric lattice together with examples.  The language we introduce for edge colored hypergraphs generalizes the same language used in the study of hypergraphs.


\subsection{Edge colored hypergraphs}

For an integer $\ell$, let $[\ell ]=\{1,\ldots, \ell\}$ and let $E$ be a finite collection of subsets of $[\ell]$, each containing at least two elements.  We say $\Ho =([\ell ],E)$ is a hypergraph. We allow hypergraphs that are not simple, which means that an edge may be contained in another edge.  See Berge \cite{Ber-hypergraphs} for a general treatment of hypergraphs.

\begin{definition}
By an \emph{edge coloring} of a hypergraph $\Ho=([\ell ],E)$ we mean a function $\C :E\to \Lambda$ for some finite set $\Lambda$. We say that a pair $(\Ho,\C)$ is a {\em edge colored hypergraph} when $\Ho$ is a hypergraph and $\C$ is a edge coloring of $\Ho$ 
\end{definition} 

Given an edge colored hypergraph $(\Ho ,\C)$ with vertices $[\ell ]$, edges $E$, and edge colors $\Lambda$ we now define a subspace arrangement $\A_{(\Ho ,\C )}$ embedded in the braid arrangement $\A_\ell$. 

\begin{definition} \label{maindef}
For any edge $e=\{i_1,i_2\ldots,i_s\}\in E$ let $$\nu(e)=\{v\in V| v_{i_1}=v_{i_2}=\cdots =v_{i_s}\}.$$ Then for each $\lambda \in \Lambda$ let $$X_\lambda = \bigcap\limits_{e\in \C^{-1}(\lambda)}\nu (e).$$ The \emph{edge colored hypergraphic arrangement} associated to $(\Ho,\C)$ is $$\A_{(\Ho,\C)}=\left\{ X_\lambda \right\} _{\lambda \in \Lambda}.$$ 
\end{definition}

Let $\A=\{X_1,\ldots ,X_n\}$ be a subspace arrangement embedded in $\A_\ell$. In order to define the edge colored hypergraph associated to $\A$ we use the isomorphism given by Orlik and Terao in the proof of Proposition 2.9 of \cite{OT} between the intersection lattice of the braid arrangement $\A_\ell$ and the partition lattice of $[\ell]$. For $r,s\in [\ell]$ with $r\neq s$ let $H_{r,s}=\{x_r-x_s=0\}$ and $H_{r,r}=V$ (notice that $H_{r,s}=\nu (\{r,s\})$ for the complete graph on the vertices $[\ell ]$, but we emphasize $H_{r,s}$ because the hypergraph has not yet been defined). For each subspace $X_i$ define the equivalence relation $\sim_i$ on $[\ell ]$ by $r\sim_is$ if and only if $X_i\subseteq H_{r,s}$. The partition of $\ell$ associated to $X_i$ is the partition defined by the equivalence classes of $\sim_i$. Denote this partition by $\pi_i =\{ B^i_1,\ldots ,B^i_{p_i}\}$. Now define an edge colored hypergraph $(\Ho_\A ,\C_\A )$ associated to $\A$, we write $(\Ho ,\C )$ when no confusion can arise. 

\begin{definition} 
The vertex set of $\Ho$ is $[\ell ]$ and the edges are $$E= \{ B^i_j | i\in \{1,\ldots ,n\} \text{, } j\in \{ 1,\ldots ,p_i\} \text{ and } |B^i_j|>1 \}.$$ The set of colors for $\Ho$ is $\Lambda =[n]$ and the color function $\C : E\to \Lambda$ is defined by $\C (B^i_j )=i$ for all $1\leq i\leq n$ and $1\leq j\leq p_i$. 
\end{definition}

\begin{remark}
We have recalled the standard definition of a subspace arrangement where there is no inclusion of subspaces. However, an edge colored hypergraphic  arrangement from Definition \ref{maindef} could very well have inclusions. 
\end{remark} 

\begin{remark}

Notice that many different edge colored hypergraphs could be associated to the same subspace arrangement embedded in $\A_\ell$. For example, let $\ell \geq3$, $e=\{1,2,3\}$, $e_1=\{1,2\}$, and $e_2=\{2,3\}$. Let $\Ho_1:=([\ell], \{e\})$, $\C_1(e)=\lambda$, $\Ho_2=([\ell],\{e_1,e_2\})$ and $\C_2(e_1)=\C_2(e_2)=\lambda$. Then the edge colored hypergraphs $(\Ho_1,\C_1)$ and $(\Ho_2,\C_2)$ are different, but their associated subspace arrangements are the same.

\end{remark}

There are many edge colored hypergraphic arrangements embedded in $\A_\ell$ which previous combinatorial data (e.g. hypergraphs) failed to capture.  For example, the subspace arrangement of all codimension $c$ subspaces, $1<c<\ell-1$, of $\A_\ell$. We end this section with a `smaller' example.

\begin{example}
Let $\ell=4$ and $(\Ho,\C)$ be the hypergraph defined by $E=\{\{1,2\},\{2,3\},\{3,4\}\}$ where the colors set is $\Lambda =\{R,B\}$ (here $R$ stands for red and $B$ for blue) and the color function is given by $\C (\{1,2\})=R$, $\C(\{2,3\})=B$, and $\C (\{3,4\})=R$. In Figure \ref{ex28} we draw the edge colored hypergraph following Berge \cite{Ber-hypergraphs}. The corresponding arrangement $\A=\{X_1,X_2\}$ is the collection of the codimension 2 space $X_1=\nu (\{1,2\})\cap \nu (\{3,4\})$ (corresponding to red) and the codimension 1 space $X_2=\nu (\{2,3\})$ (corresponding to blue). The intersection $X_1\cap X_2=\nu(\{1,2,3,4\})$ is the `diagonal'.
\begin{figure}[htbp]

\centerline {
\includegraphics[width=2.5in]{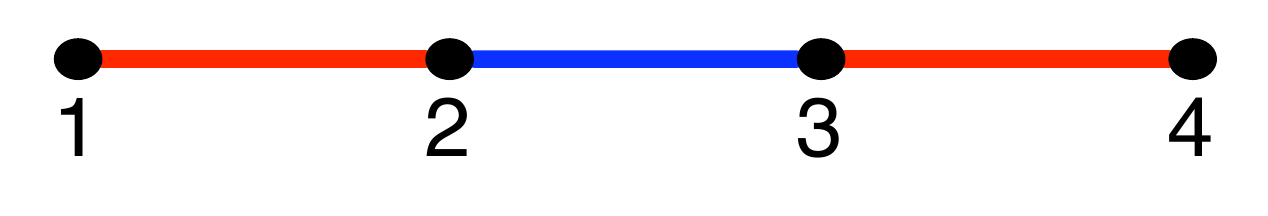}
}

\caption{An edge colored hypergraph whose arrangement could not be accounted for with only hypergraphs}\label{ex28}

\end{figure}

\end{example}


\subsection{The labeled intersection lattice}
In this section we provide the information necessary to compute the intersection lattice of an edge colored hypergraphic arrangement from the edge colored hypergraph.
Let $\A$ be an edge colored hypergraphic arrangement, $(\Ho,\C)$ its associated edge colored hypergraph, and $\Lambda$ the set of edge colors. Let $\Gamma \subseteq \Lambda$. We compute the subspace given by the intersection 
$$\bigcap\limits_{\gamma\in\Gamma}X_\gamma=\bigcap\limits_{e\in \C^{-1}(\Gamma)}\nu(e)$$
as well as its dimension in terms of $(\Ho, \C)$.
\begin{definition}
For any hypergraph $\Ho =([\ell],E)$ we say that a set of edges $\{e_1,\ldots,e_k\}\subseteq E$ is \emph{connected} if for all $i,j\in \bigcup\limits_{s=1}^ke_s$ there exists a sequence of edges $\{e_{i_1},\ldots,e_{i_t}\} \subseteq \{e_1,\ldots,e_k\}$ such that $i\in e_{i_1}$, $j\in e_{i_t}$ and for all $1\leq s\leq t-1$, $e_{i_s}\cap e_{i_{s+1}}\neq \emptyset$. Given a hypergraph $\Ho$ we call the maximal connected sets of edges {\em connected components}. \end{definition}

For any set of colors $\Gamma \subseteq \Lambda$ let $\eta =\{K_1,\ldots ,K_s\}$ be the set of connected components of the subhypergraph $\C^{-1}(\Gamma )$. Then let $\Upsilon=\{ U_1,\ldots ,U_s\}$ be the corresponding vertex sets of the connected components $\eta =\{K_1,\ldots ,K_s\}$. Then with this notation the intersection is given by 
$$\bigcap\limits_{\gamma \in \Gamma}X_\gamma=\bigcap\limits_{i=1}^s\nu (U_i).$$ 
We compute the dimensions of intersections by counting vertices.

\begin{lemma}\label{L;codim}

Let $\Ho=([\ell],E)$ be a hypergraph, $K=\{e_{i_1},\ldots,e_{i_k}\}\subseteq E$ a connected component of $\Ho$, and $\nu(e)=\{ v\in V| v_{j_1}=\cdots =v_{j_t} \text{ where } e=\{j_1,\ldots ,j_t\}\}$ be the subspace defined by $e$. Then 
$$\codim \bigcap\limits_{e\in K}\nu (e)=\left| \bigcup\limits_{e\in K}e\right| -1.$$

\end{lemma}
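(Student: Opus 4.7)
The plan is to show directly that
$$\bigcap_{e \in K} \nu(e) = \nu\!\left(\bigcup_{e \in K} e\right),$$
and then compute the codimension of the right-hand side. Once the set equality is established, the codimension claim is immediate: if $U = \{u_1, \ldots, u_m\}$, then $\nu(U)$ is cut out in $V$ by the $m-1$ linear forms $x_{u_1} - x_{u_2}, x_{u_2} - x_{u_3}, \ldots, x_{u_{m-1}} - x_{u_m}$, which are clearly linearly independent, so $\codim \nu(U) = m - 1 = |U| - 1$.

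For the set equality, the inclusion $\nu(U) \subseteq \bigcap_{e \in K} \nu(e)$ is trivial, since every edge $e \in K$ satisfies $e \subseteq U$, so any vector with all coordinates indexed by $U$ equal certainly has all coordinates indexed by $e$ equal. The content is the reverse inclusion $\bigcap_{e \in K} \nu(e) \subseteq \nu(U)$. Here I would fix a vector $v \in \bigcap_{e \in K} \nu(e)$ and arbitrary $i, j \in U$, and use the connectedness hypothesis on $K$: there exists a sequence of edges $e_{i_1}, \ldots, e_{i_t} \in K$ with $i \in e_{i_1}$, $j \in e_{i_t}$, and $e_{i_s} \cap e_{i_{s+1}} \neq \emptyset$ for each $s$. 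Pick any $k_s \in e_{i_s} \cap e_{i_{s+1}}$; then $v \in \nu(e_{i_s})$ gives $v_{k_{s-1}} = v_{k_s}$ (with the natural adjustments at the endpoints, where we use $v_i = v_{k_1}$ from $\nu(e_{i_1})$ and $v_{k_{t-1}} = v_j$ from $\nu(e_{i_t})$). Chaining these equalities yields $v_i = v_j$, proving $v \in \nu(U)$.

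The argument is essentially a graph-connectivity transitivity chase, so I do not anticipate any real obstacle. The only mild subtlety is making sure the chain argument handles the base and terminal edges correctly (i.e.\ that $i$ and $j$ can be linked to the shared vertices $k_s$ inside the same edge they belong to), but this follows at once from the definition of $\nu(e)$, which forces all coordinates indexed by $e$ to coincide. With the set equality in hand, the codimension formula follows from the explicit description of $\nu(U)$ as an intersection of $|U| - 1$ independent hyperplanes given above.
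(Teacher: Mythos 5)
Your proof is correct and follows essentially the same route as the paper: identify $\bigcap_{e\in K}\nu(e)$ with the diagonal subspace on the vertex set $U=\bigcup_{e\in K}e$ via the connectivity chain argument, then count the $|U|-1$ independent linear forms cutting it out. You spell out the chaining step that the paper merely asserts, but the underlying argument is identical.
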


\proof

Let $I_K$ be the defining ideal of the subspace $ \bigcap\limits_{e\in K}\nu (e)$. Since $K$ is a connected component we know that for all $i,j\in \bigcap\limits_{e\in K}e$ the element $x_i-x_j\in I_K$. If $\bigcap\limits_{e\in K}e=\{k_1,\ldots,k_r\}$ then $I_K=(x_{k_1}-x_{k_2},x_{k_2}-x_{k_3},\ldots,x_{k_{r-2}}-x_{k_{r-1}},x_{k_{r-1}}-x_{k_r})$. Thus, $\codim  \bigcap\limits_{e\in K}\nu (e)=r-1=\left| \bigcup\limits_{e\in K}e\right| -1.$ \qed

The following Lemma is an immediate consequence of Lemma \ref{L;codim}.

\begin{lemma} \label{L;codim2}

Let $(\Ho,\C)$ be a edge colored hypergraph with edge colors $\Lambda$ and $\A_{(\Ho,\C)}$ its associated edge colored hypergraphic arrangement. For $\Gamma\subseteq \Lambda$ let $\{K_1,\ldots ,K_s\}$ be the set of all connected components of the subhypergraph induced by $\C^{-1}(\Gamma)$ and let $V_i$ be the vertex set of $K_i$ for all $i$. Then 
$$\codim\bigcap\limits_{\gamma\in\Gamma}X_{\gamma}=\sum\limits_{i=1}^s\left[ \left|V_i \right| -1\right].$$

\end{lemma}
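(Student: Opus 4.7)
The plan is to reduce Lemma \ref{L;codim2} to Lemma \ref{L;codim} by decomposing the intersection along the connected components of $\C^{-1}(\Gamma)$. First, Definition \ref{maindef} gives $X_\gamma = \bigcap_{e \in \C^{-1}(\gamma)} \nu(e)$ for each $\gamma \in \Gamma$, so
$$\bigcap_{\gamma \in \Gamma} X_\gamma \;=\; \bigcap_{e \in \C^{-1}(\Gamma)} \nu(e).$$
Partitioning the edges of $\C^{-1}(\Gamma)$ into the connected components $K_1,\ldots,K_s$, this becomes $\bigcap_{i=1}^{s} \bigl( \bigcap_{e \in K_i} \nu(e) \bigr)$.

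Next, I would apply Lemma \ref{L;codim} to each $K_i$ separately. The proof of Lemma \ref{L;codim} identifies the defining ideal of $\bigcap_{e \in K_i} \nu(e)$ as generated by the linear forms $x_a - x_b$ with $a,b \in V_i$, and shows that its codimension equals $|V_i|-1$.

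The remaining point is that codimensions sum. Since the $K_i$ are the \emph{maximal} connected sets of edges, their vertex sets $V_1,\ldots,V_s$ are pairwise disjoint: if some $V_i \cap V_j$ contained a vertex $v$, then $K_i \cup K_j$ would be connected (any two edges in $K_i \cup K_j$ could be joined by a path through $v$), contradicting maximality. Therefore the defining ideals of the various $\bigcap_{e \in K_i} \nu(e)$ live in disjoint sets of coordinate variables, so their sum is again a complete intersection whose codimension is the sum of the individual codimensions. This yields
$$\codim \bigcap_{\gamma \in \Gamma} X_\gamma \;=\; \sum_{i=1}^{s} \bigl(|V_i| - 1\bigr),$$
as claimed.

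This is essentially a bookkeeping argument once Lemma \ref{L;codim} is in hand; the only substantive step is the disjointness of the vertex sets $V_i$, which follows directly from maximality of connected components. No real obstacle is expected.
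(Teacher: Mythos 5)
Your proof is correct and follows exactly the route the paper intends: the paper simply asserts that Lemma \ref{L;codim2} is an immediate consequence of Lemma \ref{L;codim}, and your argument---splitting the intersection over the connected components, applying Lemma \ref{L;codim} to each, and using the pairwise disjointness of the vertex sets $V_i$ (which you correctly justify via maximality) to conclude that the codimensions add---is precisely the deduction being left to the reader. No gaps.
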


\begin{definition}
For $\gG, \gG' \subseteq \gL$, we say that $\gG$ and $\gG'$ are {\em multiplicative} if 
\begin{gather}
\codim\bigcap\limits_{\gamma\in\Gamma}X_{\gamma} +\codim\bigcap\limits_{\gamma' \in\Gamma' }X_{\gamma' } = \codim\bigcap\limits_{\gamma \in \Gamma \cup \Gamma' }X_{\gamma}.
\end{gather} \end{definition}
This definition is important in Section \ref{complement} and Lemma \ref{L;product} will justify the term ``multiplicative."  According to Lemma \ref{L;codim2}, this condition can be checked by counting vertices. In the next example we present an edge colored hypergraph with both multiplicative and non multiplicative color sets.

\begin{example}
Let $\ell=5$ and $(\Ho,\C)$ be the hypergraph with edge set $E=\{\{1,2,3\},\{ 2,3,4\},\{3,4,5\}\}$, color set $\Lambda =\{ R,B,G\}$ (here $R$ stands for red, $B$ for blue and $G$ for green), and color function given by $\C(\{1,2,3\})=R$, $\C(\{2,3,4\})=B$, and $\C(\{3,4,5\})=G$ (see Figure \ref{ex28-2}).  Then the color sets $\{ \mathrm{red} \}$, $\{\mathrm{green}\}$ are multiplicative because the codimension of each element is 2 and the intersection is codimension 4. However, either red or green coupled with blue will not be multiplicative color sets. 
\begin{figure}[htbp]

\centerline {
\includegraphics[width=3in]{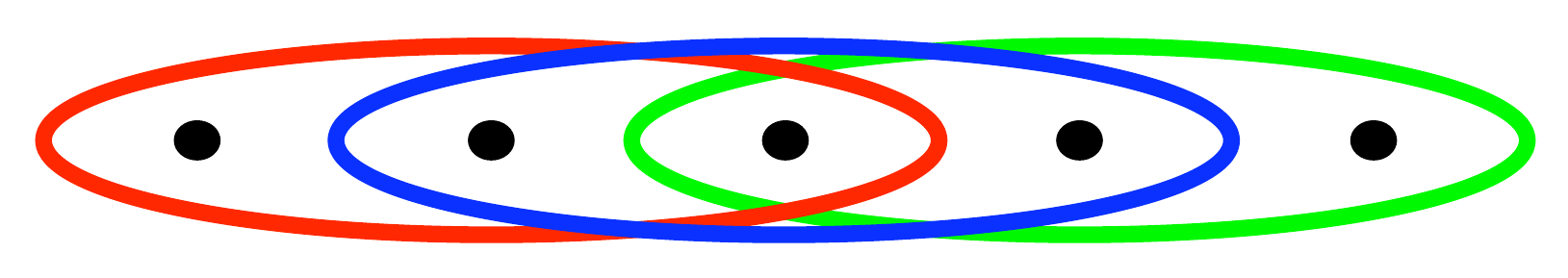}
}

\caption{An edge colored hypergraph with multiplicative color sets and non-multiplicative color sets}\label{ex28-2}
\end{figure} 

\end{example}

We can also view containment of elements in the intersection lattice through examining vertices of connected components.

\begin{definition} \label{D;refine}


Let $E$ and $E'$ be two sets of edges of a hypergraph $\Ho$ with connected components $\{ K_1,\ldots , K_s\}$ and $\{ K_1',\ldots , K_t'\}$ respectively. Let $\{ V_1,\ldots ,V_s\}$ and $\{ V_1',\ldots ,V_t'\}$ be the vertex sets of the connected components $\{ K_1,\ldots , K_s\}$ and $\{ K_1',\ldots , K_t'\}$. We say that $E$ is a \emph{refinement} of $E'$ and write $E\Subset E'$ if for each $1\leq i\leq s$ there exists a $1\leq j\leq t$ such that $V_i \subseteq V_j'$.  

\end{definition}

This is equivalent to saying that every connected component of $E$ is a subset of some connected component of $E'$. We write $E\equiv E'$ if $E\Subset E'$ and $E\Supset E'$, which is to say that the vertices of the connected components are equal.

From this definition $E\Subset E'$ if and only if 
$$\bigcap\limits_{e\in E}\nu (e) \supseteq \bigcap\limits_{f\in E'} \nu (f).$$
Also $E\equiv E'$ if and only if 
$$\bigcap\limits_{e\in E}\nu (e) = \bigcap\limits_{f\in E'} \nu (f).$$
 Now, let $(\Ho,\C)$ be an edge colored hypergraph with edge colors $\Lambda$. For $\gG,\gG' \subseteq \Lambda$  then $\C^{-1}(\Gamma )\Subset \C^{-1}(\gG' )$ if and only if $\bigcap\limits_{a\in \Gamma}X_a\supseteq \bigcap\limits_{b\in \gG'}X_b.$ For $ \Gamma,\gG' \subseteq \Lambda$ we write $\Gamma \Subset \gG'$ if $\C^{-1}(\Gamma )\Subset \C^{-1}(\gG' )$. For the remainder of this paper we assume that for all $\lambda,\lambda'\in \Lambda$, $\lambda\not\Subset \lambda'$.  This assumption ensures that there is no inclusion of subspaces in the arrangement. 

 We will use the following elementary lemma in the next section.

\begin{lemma}\label{subcolors}

Let $(\Ho,\C)$ be an edge colored hypergraph with edge colors $\Lambda$. Fix $\Gamma_1,\Gamma_2\subseteq \Lambda$. Let $\Psi_1$ and $\Psi_2$ be maximal color sets such that $\Psi_1\equiv \Gamma_1$ and $\Psi_2\equiv \Gamma_2$. Then $\C^{-1}(\Gamma_1) \Subset \C^{-1}(\Gamma_2)$ if and only if $\Psi_1\subseteq \Psi_2$.

\end{lemma}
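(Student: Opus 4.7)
My plan is to first pin down an explicit description of the maximal equivalent color set $\Psi_i$, and then reduce the statement to a short chain of set-inclusions coming from the characterization of $\Subset$ via intersections of subspaces that was recorded just after Definition \ref{D;refine}.

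\smallskip

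\noindent \textbf{Step 1: Describe $\Psi_i$ explicitly.} I claim that
\[
\Psi_i \;=\; \bigl\{\, \lambda \in \Lambda \,:\, \C^{-1}(\{\lambda\}) \Subset \C^{-1}(\Gamma_i) \,\bigr\},
\]
and in particular $\Gamma_i \subseteq \Psi_i$. Indeed, adjoining any color $\lambda$ whose edges already lie in connected components of $\C^{-1}(\Gamma_i)$ does not change the vertex sets of the connected components, so $\Gamma_i \cup \{\lambda\} \equiv \Gamma_i$; iterating this shows the displayed set is equivalent to $\Gamma_i$. Conversely, any $\lambda$ not satisfying $\C^{-1}(\{\lambda\}) \Subset \C^{-1}(\Gamma_i)$ introduces a vertex outside the existing components (or merges two components), strictly enlarging some $V_j$, so it cannot lie in any color set equivalent to $\Gamma_i$. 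Maximality then forces the equality. Using the equivalence recorded after Definition \ref{D;refine}, this can be restated as
\[
\Psi_i \;=\; \Bigl\{\, \lambda \in \Lambda \,:\, X_\lambda \supseteq \bigcap_{\gamma \in \Gamma_i} X_\gamma \,\Bigr\}.
\]

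\smallskip

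\noindent \textbf{Step 2: The ``only if'' direction.} Assume $\C^{-1}(\Gamma_1) \Subset \C^{-1}(\Gamma_2)$, which by the equivalence above Definition \ref{D;refine} means $\bigcap_{\gamma \in \Gamma_1} X_\gamma \supseteq \bigcap_{\gamma \in \Gamma_2} X_\gamma$. Pick any $\lambda \in \Psi_1$. Then by Step 1,
\[
X_\lambda \;\supseteq\; \bigcap_{\gamma \in \Gamma_1} X_\gamma \;\supseteq\; \bigcap_{\gamma \in \Gamma_2} X_\gamma,
\]
so $\lambda \in \Psi_2$ by the same characterization. Hence $\Psi_1 \subseteq \Psi_2$.

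\smallskip

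\noindent \textbf{Step 3: The ``if'' direction.} Assume $\Psi_1 \subseteq \Psi_2$. Since $\Gamma_1 \subseteq \Psi_1 \subseteq \Psi_2$, we have $\bigcap_{\lambda \in \Psi_2} X_\lambda \subseteq \bigcap_{\gamma \in \Gamma_1} X_\gamma$. But $\Psi_2 \equiv \Gamma_2$ gives $\bigcap_{\lambda \in \Psi_2} X_\lambda = \bigcap_{\gamma \in \Gamma_2} X_\gamma$. Combining, $\bigcap_{\gamma \in \Gamma_1} X_\gamma \supseteq \bigcap_{\gamma \in \Gamma_2} X_\gamma$, which by the characterization just under Definition \ref{D;refine} is exactly $\C^{-1}(\Gamma_1) \Subset \C^{-1}(\Gamma_2)$.

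\smallskip

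The main thing to be careful about is Step 1: confirming that the maximal $\Psi_i$ is literally the set of colors that individually refine into the components of $\C^{-1}(\Gamma_i)$, and that this set contains $\Gamma_i$. Once that identification is in hand (a routine check using Lemma \ref{L;codim2} or the subspace reformulation of $\Subset$), the two implications are just the transitivity of $\supseteq$ among the subspaces $\bigcap X_\gamma$, so no real obstacle remains.
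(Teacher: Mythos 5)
Your proof is correct; the paper itself states Lemma \ref{subcolors} without proof (calling it elementary), so there is no official argument to compare against, but your argument fills that gap cleanly. The key point is exactly your Step 1, the identification $\Psi_i = \{\lambda \in \Lambda : X_\lambda \supseteq \bigcap_{\gamma \in \Gamma_i} X_\gamma\}$; note that both halves of that identification follow in one line from the subspace characterization of $\Subset$ recorded after Definition \ref{D;refine}: any $\Psi \equiv \Gamma_i$ has $X_\lambda \supseteq \bigcap_{\mu \in \Psi} X_\mu = \bigcap_{\gamma \in \Gamma_i} X_\gamma$ for each $\lambda \in \Psi$, so every equivalent color set sits inside the displayed set, which is itself equivalent to $\Gamma_i$ and hence is the unique maximal one. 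This is slightly tighter than your verbal argument about merging components and enlarging some $V_j$, which is fine but informal; once Step 1 is in place, Steps 2 and 3 are, as you say, just transitivity of containment among the subspaces.
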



\subsection{Geometric edge colored hypergraphs}

Let $(\Ho,\C)$ be an edge colored hypergraph with edge colors $\Lambda$ and let $\A_{(\Ho,\C)}$ be the associated subspace arrangement. Let $\Gamma_1,\Gamma_2\subseteq \Lambda$ and $X_1=\bigcap\limits_{\gamma \in \Gamma_1}X_\gamma =\bigcap\limits_{e\in \C^{-1}(\Gamma_1 )}\nu (e)$ and $X_2=\bigcap\limits_{\gamma \in \Gamma_2}X_\gamma =\bigcap\limits_{e\in \C^{-1}(\Gamma_2 )}\nu (e)$ be the respective elements of the intersection lattice $L(\A_{(\Ho,\C)})$. Then the least upper bound or join of $X_1$ and $X_2$ is $$X_1\vee X_2=X_1 \cap X_2 = \bigcap\limits_{\gamma \in \Gamma_1 \cup \Gamma_2}X_\gamma=\bigcap\limits_{e\in \C^{-1}(\Gamma_1\cup \Gamma_2)}\nu (e).$$ Computing greatest lower bounds or meets in intersection lattices of subspaces is in general not tractable. However, in this setting we can view a meet of two elements of the intersection lattice in terms of edge colors. The next lemma follows from Lemma \ref{subcolors}.

\begin{lemma}

Let $X_1,X_2\in L(\A_{(\Ho,\C)})$ and let $\Psi_1,\Psi_2\subseteq \Lambda$ be maximal color sets such that $X_i=\bigcap\limits_{\gamma \in \Psi_i}X_\gamma$ for $i=1,2$. If $\Psi_1\cap \Psi_2\neq \emptyset$ then the meet of $X_1$ and $X_2$ is  $$X_1\wedge X_2 =\bigcap\limits_{e\in \C^{-1}(\Psi_1\cap \Psi_2)}\nu (e).$$ If $\Psi_1\cap \Psi_2= \emptyset$ then $X_1\wedge X_2=V$.

\end{lemma}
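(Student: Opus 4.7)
The approach is to verify that $W := \bigcap_{\gamma \in \Psi_1 \cap \Psi_2} X_\gamma$ (with the convention $W = V$ when $\Psi_1 \cap \Psi_2 = \emptyset$) satisfies the universal property of a greatest lower bound in the reverse inclusion order on $L(\A_{(\Ho,\C)})$. Lemma \ref{subcolors} is the key tool: it provides the dictionary between containment of intersection lattice elements and inclusion of their maximal color sets, so once every relevant element is expressed via its maximal color representative the problem reduces to elementary set-theoretic manipulations on subsets of $\Lambda$.

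First I would check that $W$ is a lower bound for $X_1$ and $X_2$. Since $\Psi_1 \cap \Psi_2 \subseteq \Psi_i$, intersecting over a smaller color set yields a larger subspace, so $W \supseteq X_i$ for $i = 1, 2$; in the reverse inclusion order, $W \leq X_i$. When $\Psi_1 \cap \Psi_2 \neq \emptyset$ the subspace $W$ is a nonempty intersection of elements of $\A_{(\Ho,\C)}$, hence lies in $L(\A_{(\Ho,\C)})$. For the greatest lower bound property, let $Y \in L(\A_{(\Ho,\C)})$ satisfy $Y \supseteq X_1$ and $Y \supseteq X_2$, and write $Y = \bigcap_{\gamma \in \Theta} X_\gamma$ for the unique maximal color set $\Theta \subseteq \Lambda$ representing $Y$. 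Each containment $Y \supseteq X_i$ translates to $\C^{-1}(\Theta) \Subset \C^{-1}(\Psi_i)$, so Lemma \ref{subcolors} gives $\Theta \subseteq \Psi_i$ for $i = 1, 2$. Therefore $\Theta \subseteq \Psi_1 \cap \Psi_2$, which yields
$$Y = \bigcap_{\gamma \in \Theta} X_\gamma \;\supseteq\; \bigcap_{\gamma \in \Psi_1 \cap \Psi_2} X_\gamma = W,$$
so $Y \leq W$ in the poset, confirming $W = X_1 \wedge X_2$. The case $\Psi_1 \cap \Psi_2 = \emptyset$ is a special instance: the same argument forces $\Theta = \emptyset$, giving $Y = V$ and hence $X_1 \wedge X_2 = V$.

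Once Lemma \ref{subcolors} is in hand there is no really hard step; the only points needing care are the direction of the reverse inclusion order and the fact that Lemma \ref{subcolors} applies only to maximal color sets, which is why an arbitrary lower bound $Y$ must be expressed via its unique maximal representative $\Theta$ rather than any color set whose intersection happens to equal $Y$.
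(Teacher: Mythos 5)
Your proof is correct and takes the same route the paper intends: the paper states this lemma with no further argument beyond the remark that it follows from Lemma \ref{subcolors}, and your verification of the greatest-lower-bound property --- translating each containment $Y \supseteq X_i$ into an inclusion of maximal color sets and intersecting --- is exactly that argument written out in full. The points you flag (working with the unique maximal representative $\Theta$, and the direction of the reverse-inclusion order) are indeed the only places where care is needed.
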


As in this Lemma we define the join of two color sets $\Gamma_1$ and $\Gamma_2$ as $\Gamma_1 \wedge \Gamma_2:=\Psi_1 \cap \Psi_2$ where $\Psi_1$ and $\Psi_2$ are maximal color sets such that $\Psi_1\equiv \Gamma_1$ and $\Psi_2\equiv \Gamma_2$. This is an abuse of notation because we are considering the empty set as a color with no edges.

\begin{definition}\label{covers}

Let $(\Ho,\C)$ be an edge colored hypergraph with edge colors $\Lambda$ and let $\Gamma_1,\Gamma_2\subseteq \Lambda$. We say that $\Gamma_1$ \emph{covers} $\Gamma_2$ if $\Gamma_1\Supset \Gamma_2$ and there does not exist a set of colors $\Psi$ that satisfies $\Psi \not\equiv \Gamma_i$ for $i=1,2$ and $\Gamma_1\Supset \Psi \Supset \Gamma_2$. 

\end{definition}

Now we give conditions on an edge colored hypergraph so that its corresponding subspace arrangement has a geometric intersection lattice.

\begin{definition}\label{geohyp}

We say that an edge colored hypergraph $(\Ho,\C)$ is geometric if whenever two color sets $\Gamma_1$ and $\Gamma_2$ cover $\Gamma_1\wedge \Gamma_2$ then $\Gamma_1 \cup \Gamma_2$ covers $\Gamma_1$ and $\Gamma_2$.

\end{definition}

This definition is different from the definition of a `geometric hypergraph' found in Helgason \cite{Helgason}.  The next proposition follows immediately since the intersection lattice of any subspace arrangement is atomic and Definition \ref{geohyp} is exactly the semimodular property.

\begin{proposition}

Let $(\Ho ,\C)$ be an edge colored hypergraph. The intersection lattice $L(\A_{(\Ho ,\C)})$ is geometric if and only if $(\Ho ,\C)$ is a geometric hypergraph.

\end{proposition}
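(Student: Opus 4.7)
The plan is to invoke the standard characterization that a lattice is geometric if and only if it is atomic and semimodular. Since the proposition asserts an equivalence between a lattice-theoretic condition on $L(\A_{(\Ho,\C)})$ and a combinatorial condition on $(\Ho,\C)$, I would split the argument into (i) verifying atomicity of $L(\A_{(\Ho,\C)})$ unconditionally, and (ii) showing that the combinatorial condition in Definition \ref{geohyp} is exactly a translation of semimodularity via the color-set/lattice-element dictionary already established.

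For atomicity, I would observe that every element of $L(\A_{(\Ho,\C)})$ above the bottom is of the form $\bigcap_{\gamma\in\Gamma} X_\gamma$ for some $\Gamma\subseteq \Lambda$. Because $L(\A_{(\Ho,\C)})$ is ordered by reverse inclusion, this intersection is precisely the join $\bigvee_{\gamma\in\Gamma}X_\gamma$ of the atoms $X_\gamma$. Hence $L(\A_{(\Ho,\C)})$ is atomic irrespective of any hypothesis on $(\Ho,\C)$. This step requires no work beyond unwinding definitions.

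For the main equivalence, I would use the correspondence between lattice elements and equivalence classes of color sets under $\equiv$. Under this correspondence, Lemma \ref{subcolors} identifies the partial order $\supseteq$ on lattice elements with $\Subset$ on (maximal) color sets, so Definition \ref{covers} is the honest translation of the lattice covering relation. The join on the lattice corresponds to union of color sets, while the meet $X_1 \wedge X_2$ corresponds to the color-set meet $\Gamma_1 \wedge \Gamma_2 = \Psi_1 \cap \Psi_2$ where $\Psi_i$ is the maximal color set with $\Psi_i \equiv \Gamma_i$ (as defined just before Definition \ref{covers}). Under this dictionary, the semimodular condition in $L(\A_{(\Ho,\C)})$,
\[
X_1 \text{ and } X_2 \text{ cover } X_1 \wedge X_2 \ \implies\ X_1 \vee X_2 \text{ covers } X_1 \text{ and } X_2,
\]
becomes verbatim the condition in Definition \ref{geohyp} on color sets. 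Combined with atomicity, this yields the claimed equivalence.

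The main (and only) subtlety is making sure the translation between color sets and lattice elements is compatible with the covering relation; this is why Lemma \ref{subcolors} and the convention of working with \emph{maximal} representative color sets $\Psi_i$ are essential. Once that dictionary is in place, no further work is needed: both the forward and reverse implications of the biconditional are immediate readings of the same statement in two languages.
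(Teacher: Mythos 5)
Your proposal is correct and follows essentially the same route as the paper, which likewise observes that the intersection lattice of any subspace arrangement is atomic and that Definition \ref{geohyp} is precisely the semimodular property under the color-set dictionary. You simply spell out the translation (via Lemma \ref{subcolors} and maximal representatives) in more detail than the paper, which treats the whole thing as immediate.
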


We conclude this section with examples.

\begin{example}

Let $\Ho = ( [4], \{a,b,c\} )$ where $a=\{1,2,3\}$, $b=\{3,4\}$, and $c=\{2,4\}$, and let each edge have it's own color.  In Figure \ref{smalldude} we draw the hypergraph as well as the intersection lattice of the corresponding arrangement.
\begin{figure}[htbp]

\centerline {
\includegraphics[width=2.5in]{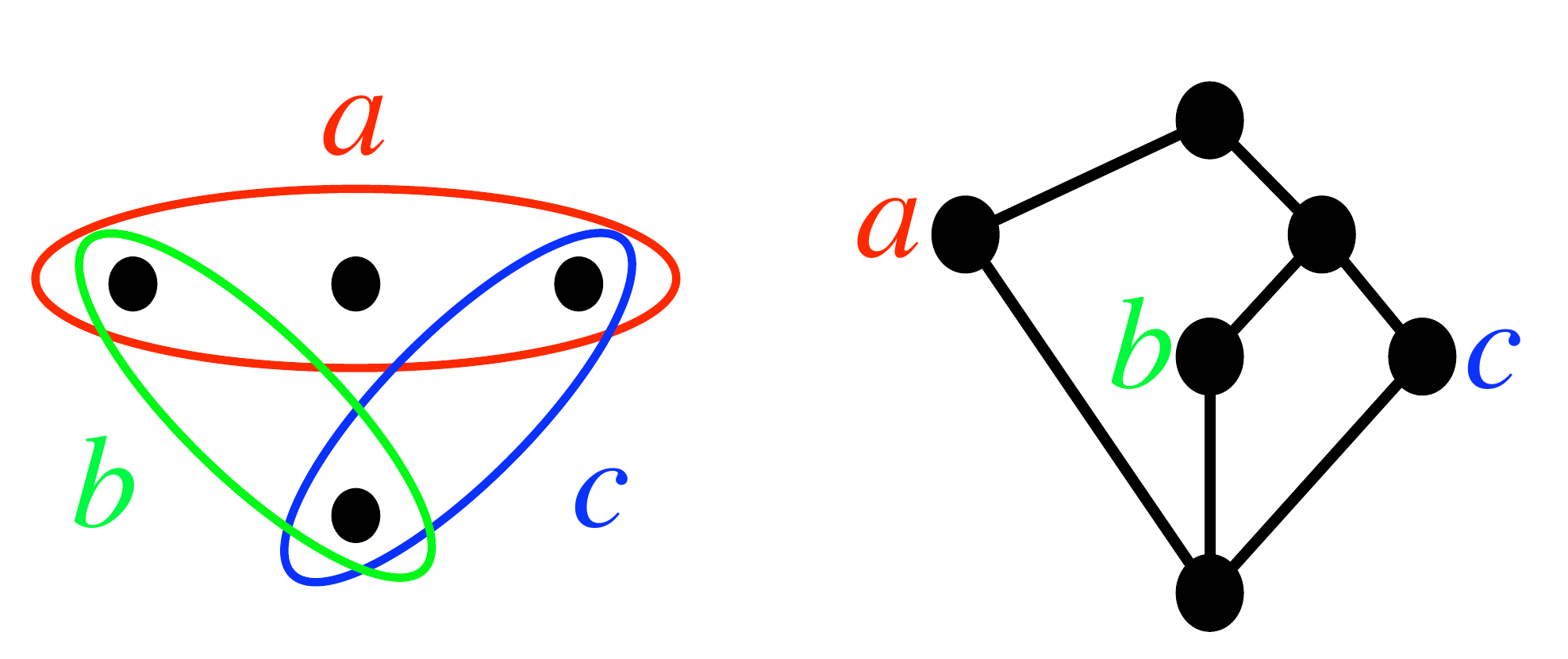}
}

\caption{On the left is the smallest hypergraph that is not geometric with the corresponding intersection lattice on the right.}
\label{smalldude}
\end{figure} 
Notice that this intersection lattice is the smallest non-geometric atomic lattice. 

\end{example}

The next examples illustrate that the extremal codimensional cases have geometric hypergraphs.

\begin{example}[Hyperplane arrangements]

Let the hypergraph $(\Ho ,\C)$ have edges with exactly two vertices so that it is a graph and let each edge have its own color. Then the arrangement is a graphic hyperplane arrangement and all intersection lattices of hyperplane arrangements are geometric. Hence, graphs with a different edge color for each edge are geometric.

\end{example}

\begin{example}[Line arrangements]

Now, let $(\Ho, \C)$ with vertex set $[\ell ]$ be a hypergraph where every edge has $\ell -1$ vertices. Then $(\Ho,\C)$ is geometric because the intersection lattice of the associated arrangement only consists of the origin, the vector space, and the lines. 

\end{example}


\section{Characteristic Polynomials}\label{Poly}

In this section we construct a generalized chromatic polynomial from an edge colored hypergraph $(\Ho ,\C)$ and show that this polynomial is equal to the characteristic polynomial of the associated edge colored hypergraphic arrangement $\A_{(\Ho,\C)}$. First let us review some classical invariants for arbitrary subspace arrangements. Given an arbitrary subspace arrangement $\A$ the M\"obius function on $L(\A)$ written as $\mu :L(\A)\to \ZZ$ is defined recursively by $\mu (V)=1$ and for all $X\in L(\A)$
$$\mu (X)=-\sum\limits_{X\subsetneq Y}\mu (Y)$$
(see Rota \cite{Rota-mobius} or Stanley \cite{Stan-comb}).
The characteristic polynomial of $\A$ is a polynomial in one variable defined by 
$$\chi (\A,t)=\sum\limits_{X\in L(\A)}\mu (X)t^{\dim X}.$$
The study of $\chi(\A,t)$ is a celebrated area of mathematics and has connected many fields ranging from combinatorics to topology to complexity theory, see for example Athanasiadis \cite{Ath}, Bj\"orner \cite{Bj-Subspaces}, Orlik and Terao \cite{OT}, Sagan \cite{Sag-charfactor}, and Zaslavsky \cite{Zas-char}. However, as noted by Bj\"orner in \cite{Bj-Subspaces}, for higher codimensional subspace arrangements this polynomial does not necessarily contain topological information of the arrangements real or complex complement. Thus, the results in this section are combinatorial and are not necessarily related to the Poincar\'e polynomial of the complement.  

Now let us review the foundational results of Blass and Sagan from \cite{BS-Charac}. They examine the characteristic polynomial of an subspace arrangement embedded in the Coxeter arrangement of type $\text{B}_\ell$. Let $[-s,s]^\ell$ be the set of all integer valued points in the $\ell$-cube of $\ZZ^\ell$ where $[-s,s]=\{-s,-s+1,\ldots, s-1,s\}$. Let $ \bigcup \A= \bigcup\limits_{X\in \A}X$ considered as a subset of $\ZZ^\ell$. In \cite{BS-Charac} Blass and Sagan prove the following theorem.

\begin{theorem}[Blass, Sagan]\label{BS1}
If $\A$ is embedded in $B_\ell$ then $$\chi (\A, t)=\# ([-s,s]^\ell\backslash \bigcup \A )$$ where $t=2s+1$.
\end{theorem}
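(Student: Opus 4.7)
The plan is to proceed by the classical ``finite field'' method, Möbius inverting a lattice-point count. The heart of the argument is a careful bookkeeping of integer points in each subspace of the intersection lattice $L(\A)$, combined with the defining recursion for $\mu$.

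First I would set up the inversion. For $X\in L(\A)$ let
$$f(X)=\#\left([-s,s]^\ell\cap X\right),\qquad g(X)=\#\left([-s,s]^\ell\cap X\ \bs\!\! \bigcup_{Y\in L(\A),\,Y\supsetneq X\!\!\!\!\!\!\!\!}\!\! Y\right).$$
Since every point of $[-s,s]^\ell\cap X$ lies in a unique minimal element of $L(\A)$ containing it, we have $f(X)=\sum_{Y\geq X}g(Y)$. By the definition of the Möbius function on $L(\A)$, this inverts to $g(X)=\sum_{Y\geq X}\mu(X,Y)f(Y)$. Evaluating at $X=V$ (the bottom of $L(\A)$) gives
$$\#\left([-s,s]^\ell\bs \bigcup\A\right)=g(V)=\sum_{Y\in L(\A)}\mu(Y)\,f(Y).$$
Thus the theorem will follow once we prove $f(Y)=t^{\dim Y}$ for every $Y\in L(\A)$.

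The second step, which is the crux of the proof, is this equality $f(Y)=(2s+1)^{\dim Y}$. Since $\A$ is embedded in $B_\ell$, any $Y\in L(\A)$ is an intersection of hyperplanes of the form $x_i=x_j$, $x_i=-x_j$, or $x_i=0$. I would argue by analyzing the equivalence structure these equations impose on the coordinates. The equations $x_i=\pm x_j$ generate an equivalence relation on $\{1,\dots,\ell\}$ with a sign datum: each class $B$ receives a well-defined sign function $\varepsilon_B:B\to\{+1,-1\}$ (unique up to global flip on $B$), forcing $x_i=\varepsilon_B(i)\cdot a_B$ for some parameter $a_B$; equations $x_i=0$, together with any inconsistent cycle of sign equations, force $a_B=0$ on the corresponding class. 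Letting $d$ be the number of classes not pinned to $0$, one obtains $\dim Y=d$, and each free parameter $a_B$ ranges independently over $[-s,s]$ to give a point of $[-s,s]^\ell\cap Y$. Hence $f(Y)=(2s+1)^d=t^{\dim Y}$.

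The remaining step is trivial. Substituting $f(Y)=t^{\dim Y}$ into the Möbius inversion yields
$$\#\left([-s,s]^\ell\bs\bigcup\A\right)=\sum_{Y\in L(\A)}\mu(Y)\,t^{\dim Y}=\chi(\A,t).$$
The main obstacle is the second step: one must verify that the parametrization above actually exhausts the integer points of $Y$ in the cube without over- or under-counting, and that dependent type-$B$ equations never change the count by introducing, say, a factor of $2$ somewhere (as could occur if one wrote down a redundant sign-equation system carelessly). The key observation that rescues this is that the subspaces live in $L(B_\ell)$, so the allowable equations are exactly those compatible with a signed partition, and the cardinality $(2s+1)^{\dim Y}$ depends only on $Y$ as a subspace, not on the presentation.
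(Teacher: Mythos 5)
The paper does not prove this statement: it is quoted as a known theorem of Blass and Sagan \cite{BS-Charac}, so there is no internal proof to compare against. Your argument is correct and is essentially the original (and standard) one --- M\"obius inversion of the lattice-point count over $L(\A)$ reduces the theorem to the identity $\#\left([-s,s]^\ell\cap Y\right)=t^{\dim Y}$ for $Y\in L(\A)$, which your signed-partition analysis establishes; the one point worth making explicit there is that the count of each free parameter uses the symmetry of $[-s,s]$ under negation, which is precisely why $t=2s+1$ must be odd. A small notational slip: in your definition of $g(X)$ the union should run over $Y\in L(\A)$ with $Y\subsetneq X$ (that is, $Y>X$ in the reverse-inclusion order), not $Y\supsetneq X$; your subsequent identification of $g(V)$ with $\#\left([-s,s]^\ell\bs\bigcup\A\right)$ shows this is what you intend.
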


This is a generalization of Zaslavsky's results in \cite{Zas-Chrom} and \cite{Zas-Signedcolor} concerning the relationship of the characteristic polynomial of a subspace arrangement $\A$ embedded in $B_\ell$, and the chromatic polynomial of a signed graph associated to $\A$. In this section we connect these two points of view for subspace arrangements embedded in $\A_\ell$.

In \cite{Zas-Chrom} and \cite{Zas-Signedcolor}, Zaslavsky constructs a signed graph for hyperplane arrangements (not general subspace arrangements) which are embedded in $B_\ell$. The signed graph is the edge colored hypergraph we defined above where all edges are sets with only two elements and each edge has its own color.  In addition there is a sign for each edge which indicates that the hyperplane is defined by $x_i-x_j=0$ or $x_i+x_j=0$, and there are `half' edges that correspond to hyperplanes defined by $x_i=0$.  Given such a graph $G_\A$, Zaslavsky defines a chromatic polynomial $\chi (G_\A,t)$ of $G$ and he proves the following theorem. 

\begin{theorem}[Zaslavsky]\label{Zas1}

Let $\A$ be a hyperplane arrangement embedded in $L(B_\ell )$. Then $$\chi (G_\A ,t)=\chi (\A,t).$$

\end{theorem}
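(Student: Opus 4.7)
The plan is to derive Zaslavsky's identity as a consequence of the Blass--Sagan formula (Theorem \ref{BS1}), by matching both sides of the desired equality with the same lattice point count.

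First I would unpack the definition of the chromatic polynomial $\chi(G_\A, t)$ of a signed graph. A proper signed $t$-coloring of $G_\A$ (with $t = 2s+1$) is a function $c \colon [\ell] \to [-s,s]$ subject to three types of constraints, one for each type of edge in $G_\A$: for a positive edge $\{i,j\}$ (coming from $x_i - x_j = 0$), we require $c(i) \neq c(j)$; for a negative edge $\{i,j\}$ (coming from $x_i + x_j = 0$), we require $c(i) \neq -c(j)$; and for a half edge at $i$ (coming from $x_i = 0$), we require $c(i) \neq 0$. By the standard combinatorial definition, $\chi(G_\A, t)$ counts exactly such proper colorings.

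Next I would establish the key bijection: the set of proper signed $(2s+1)$-colorings of $G_\A$ is in canonical bijection with $[-s,s]^\ell \setminus \bigcup \A$. Identifying a point $p = (p_1, \ldots, p_\ell) \in [-s,s]^\ell$ with the coloring $c(i) = p_i$, one observes directly that $p$ lies on a hyperplane of $\A$ if and only if the corresponding coloring violates the associated constraint. Indeed, $p \in \{x_i - x_j = 0\}$ iff $c(i) = c(j)$, $p \in \{x_i + x_j = 0\}$ iff $c(i) = -c(j)$, and $p \in \{x_i = 0\}$ iff $c(i) = 0$. Thus $p \notin \bigcup \A$ is equivalent to $c$ being a proper signed coloring, giving
\begin{equation*}
\chi(G_\A, 2s+1) = \#\bigl([-s,s]^\ell \setminus \bigcup \A\bigr).
\end{equation*}

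Combining this with Theorem \ref{BS1} yields $\chi(G_\A, t) = \chi(\A, t)$ for every $t$ of the form $2s+1$ with $s \in \ZZ_{\geq 0}$. Since both $\chi(G_\A, t)$ and $\chi(\A, t)$ are polynomials in $t$ and they agree at infinitely many values, they are equal as polynomials, which is the claimed identity.

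The main obstacle is the bookkeeping in the bijection step: one must treat all three edge types of the signed graph uniformly and verify that the correspondence between violations of coloring constraints and incidences with hyperplanes is exhaustive and injective, i.e., that the signed graph $G_\A$ encodes every hyperplane of $\A$ and nothing more. Once this careful cataloguing is done, the polynomial identity follows by the standard ``polynomial agreeing at infinitely many points'' argument.
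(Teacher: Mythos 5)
The paper states this theorem as a cited result of Zaslavsky (\cite{Zas-Chrom}, \cite{Zas-Signedcolor}) and gives no proof of its own, so there is nothing in-paper to compare against directly; your derivation from Theorem \ref{BS1} via the bijection between proper signed $(2s+1)$-colorings and the points of $[-s,s]^\ell\setminus\bigcup\A$ is correct and is precisely the strategy the authors themselves use to prove their generalization, Theorem \ref{character}. The one step worth making explicit is that treating $\chi(G_\A,t)$ as a polynomial in $t$ (so that agreement at the infinitely many values $t=2s+1$ upgrades to a polynomial identity) requires knowing the signed-coloring counting function is a polynomial, which comes from a deletion--contraction argument exactly as in the paper's Proposition \ref{dele-contrac}.
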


We generalize Zaslavsky's ideas of vertex graph coloring to vertex coloring of an edge colored hypergraph. Then we use Theorem \ref{BS1} and the ideas from Theorem \ref{Zas1} to show that this is equal to the characteristic polynomial.

Fix an edge colored hypergraphic arrangement $\A_{(\Ho,\C)}$ in $V=\CC^\ell$ where $(\Ho,\C)$ is the associated edge colored hypergraph. A vertex coloring of $(\Ho,\C)$ is a map $c:[\ell] \to \gX$ where $\gX$ is a finite set of $t$ colors.

\begin{definition}\label{proper}

We say that a vertex coloring $c$ of $(\Ho,\C)$ is \emph{proper} if for every $\lambda \in \Lambda$ there exists a connected component $K\in \C^{-1}(\lambda )$ and some $i,j\in K$ with $c(i)\neq c(j)$.

\end{definition}

\begin{definition}\label{chromatic}

Let $(\Ho,\C)$ be an edge colored hypergraph. The \textit{chromatic polynomial} of $(\Ho,\C)$ is a polynomial in the variable $t$ defined by $$\chi (\Ho,\C ,t)=\# (\text{proper vertex colorings of } (\Ho,\C) \text{ with } t \text{ colors}).$$

\end{definition}

\begin{remark}
One can also define a signed edge colored hypergraph with half edges generalizing the results of Blass and Sagan \cite{BS-Charac} and Zaslavsky \cite{Zas-Chrom} and \cite{Zas-Signedcolor}.  
\end{remark}

To justify that this is a polynomial we define deletion and contraction of an edge colored hypergraph.  

\begin{definition}

Let $(\Ho , \C) = ( ( V , E) , \C)$ be a an edge colored hypergraph with colors $\gL$.  Fix $\gl \in \gL$ and let $\{K_1 , \ldots , K_s\}$ be the connected components of $\C^{-1}(\lambda )$.  We define the {\em deletion} of $(\Ho,\C)$ by $\gl$ to be 
$$(\Ho' , \C') = ( ( V , E \backslash \C^{-1} ( \gl )) , \C|_{E \backslash \C^{-1} ( \gl )} ).$$
We define the {\em contraction} of $(\Ho , \C)$ along $\gl$ to be 
$$( \Ho'' , \C'') = ( (V'' , E \backslash \C^{-1}(\gl)) , \C|_{E \backslash \C^{-1} ( \gl )}),$$
where $V''$ is vertex set $V$ with the vertices of $K_i$ identified for each $i$.

\end{definition}

The next proposition follows from a standard deletion-contraction argument.

\begin{proposition}\label{dele-contrac} 
Let $(\Ho, \C)$ be an edge colored hypergraph with $( \Ho' , \C')$ and $( \Ho'' , \C'')$ as defined above then
$$\gc(  ( \Ho , \C) , t ) = \gc(  ( \Ho' , \C') , t ) - \gc(  ( \Ho'' , \C'') , t ).$$

\end{proposition}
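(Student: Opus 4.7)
The plan is to prove the identity by a direct bijective accounting of proper vertex colorings. Let $\gX$ be a set of $t$ colors. The vertex set of $\Ho'$ coincides with that of $\Ho$, and since the connected components of $\C^{-1}(\mu)$ for $\mu \neq \lambda$ are unaffected by the removal of the $\lambda$-edges, the properness condition of Definition \ref{proper} at every color $\mu \neq \lambda$ is the same for $(\Ho, \C)$ and $(\Ho', \C')$. Thus a proper coloring of $(\Ho, \C)$ is precisely a proper coloring of $(\Ho', \C')$ that in addition satisfies the $\lambda$-condition, namely that at least one connected component of $\C^{-1}(\lambda)$ is non-monochromatic. This yields
$$\gc((\Ho, \C), t) = \gc((\Ho', \C'), t) - N,$$
where $N$ counts proper colorings of $(\Ho', \C')$ under which every connected component of $\C^{-1}(\lambda)$ is monochromatic.

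The second step is to identify $N$ with $\gc((\Ho'', \C''), t)$. A coloring $c : [\ell] \to \gX$ is constant on every connected component $K_i$ of $\C^{-1}(\lambda)$ if and only if it factors as $c = \tilde c \circ q$, where $q: [\ell] \twoheadrightarrow V''$ is the quotient map and $\tilde c : V'' \to \gX$ is a coloring of $\Ho''$. This is a bijection between colorings of $V$ that are constant on $\lambda$-components and colorings of $V''$. What remains is to show that $c$ is proper for $(\Ho', \C')$ if and only if $\tilde c$ is proper for $(\Ho'', \C'')$.

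I expect the main obstacle to be this last equivalence, because the connected components of $(\C'')^{-1}(\mu)$ in $\Ho''$ can differ from those of $\C^{-1}(\mu)$ in $\Ho$: two distinct $\mu$-components of $\Ho$ may merge in $\Ho''$ once their vertices get identified through some $K_i$. One direction is easy: the quotient map preserves $c$-values, so any $\mu$-component of $\Ho$ that is non-monochromatic for $c$ produces a $\mu$-component of $\Ho''$ that is non-monochromatic for $\tilde c$. For the converse, suppose a $\mu$-component $K''$ of $\Ho''$ is non-monochromatic for $\tilde c$; then one walks a chain of $\mu$-edges and $\lambda$-identifications through the preimage of $K''$ and uses the hypothesis that $c$ is constant on every $K_i$ to conclude that some $\mu$-component of $\Ho$ lying over $K''$ must already exhibit two vertices of different $c$-values. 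With this in hand, $N = \gc((\Ho'', \C''), t)$ and the proposition follows.
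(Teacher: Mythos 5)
Your proof is correct and follows exactly the standard deletion--contraction argument that the paper invokes without writing out; in particular, you correctly isolate and resolve the one genuine subtlety of the edge-colored hypergraph setting, namely that $\mu$-components can merge under contraction, using the hypothesis that $c$ is constant on each $\lambda$-component $K_i$ to show a merged component cannot become non-monochromatic unless one of its constituent $\mu$-components already was. Nothing further is needed.
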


The previous proposition provides the inductive step for an induction on the number of colors to show that the chromatic polynomial for an edge colored hypergraph is indeed a polynomial.  The base case for the induction is the edge colored hypergraph with no edges which has chromatic polynomial $t^{\ell}$.

Now, we state the main theorem of this section. It can be proved by using a standard deletion-contraction argument and Proposition \ref{dele-contrac}. We choose to prove it by using Theorem \ref{BS1} of Blass and Sagan because each point in the integer complement gives an explicit vertex coloring of the arrangements associated edge colored hypergraph.

\begin{theorem}\label{character}

If $(\Ho,\C)$ is an edge colored hypergraph and $\A_{(\Ho,\C)}$ is the associated edge colored hypergraphic arrangement then $$\chi (\A_{(\Ho,\C)},t)= \chi (\Ho,\C ,t).$$

\end{theorem}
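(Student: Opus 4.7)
The plan is to follow the authors' stated preference and invoke Theorem \ref{BS1} of Blass and Sagan, since every edge colored hypergraphic arrangement $\A_{(\Ho,\C)}$ is embedded in $\A_\ell$, which is contained in the Coxeter arrangement $B_\ell$. For a nonnegative integer $s$ and $t=2s+1$ we then have
$$\chi(\A_{(\Ho,\C)},t)=\#\bigl([-s,s]^\ell\setminus \textstyle\bigcup\A_{(\Ho,\C)}\bigr),$$
so it suffices to identify the lattice points on the right hand side with the proper vertex colorings of $(\Ho,\C)$ using $t=2s+1$ colors.

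The key step is the following translation. Every lattice point $v=(v_1,\ldots,v_\ell)\in[-s,s]^\ell$ is naturally interpreted as a vertex coloring $c_v:[\ell]\to[-s,s]$ with $c_v(i)=v_i$, and this establishes a bijection between $[-s,s]^\ell$ and the set of vertex colorings with $t=2s+1$ colors. I would then argue that $v\notin X_\lambda$ if and only if there is a connected component $K$ of $\C^{-1}(\lambda)$ and vertices $i,j$ in $K$ with $c_v(i)\ne c_v(j)$. For the direction where $v\in X_\lambda$: since $X_\lambda=\bigcap_{e\in\C^{-1}(\lambda)}\nu(e)$, one has $v_i=v_j$ whenever $i,j$ lie in a common edge of color $\lambda$; by induction along a path of intersecting edges (the definition of a connected component of $\C^{-1}(\lambda)$, or equivalently the computation of the defining ideal $I_K$ in the proof of Lemma \ref{L;codim}) this extends to all $i,j$ in the vertex set of any connected component of $\C^{-1}(\lambda)$. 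The reverse direction is immediate from the same computation. Consequently, $v\notin\bigcup_\lambda X_\lambda$ if and only if $c_v$ is proper in the sense of Definition \ref{proper}.

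Combining these bijections gives
$$\chi(\A_{(\Ho,\C)},t)=\#\{\text{proper vertex colorings of }(\Ho,\C)\text{ with }t\text{ colors}\}=\chi(\Ho,\C,t)$$
for every positive odd integer $t=2s+1$. Since $\chi(\A_{(\Ho,\C)},t)$ is a polynomial in $t$ (a classical fact about the characteristic polynomial of a subspace arrangement) and $\chi(\Ho,\C,t)$ is a polynomial in $t$ by Proposition \ref{dele-contrac}, two polynomials agreeing at infinitely many values must coincide, so the theorem follows.

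I expect the only delicate point to be the connected component argument in step two: one must be careful that the definition of ``proper'' only demands a single non-monochromatic connected component per color $\lambda$, which matches the condition for $v\notin X_\lambda$ (not $v\notin \nu(e)$ for every edge of color $\lambda$). The calculation of $I_K$ in the proof of Lemma \ref{L;codim} makes this step essentially immediate, so the remainder is bookkeeping.
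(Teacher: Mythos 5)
Your proposal is correct and follows essentially the same route as the paper: both invoke Theorem \ref{BS1} and then set up the bijection between lattice points of $[-s,s]^\ell$ avoiding $\bigcup\A_{(\Ho,\C)}$ and proper vertex colorings with $t=2s+1$ colors, using the connected-component description of $X_\lambda$. Your closing remark that the two polynomials agree at infinitely many odd values and hence coincide is a point the paper leaves implicit, but it is the same argument.
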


\begin{proof}

Let $\Lambda$ be the set of edge colors for $(\Ho,\C)$. By Theorem \ref{BS1} $$\chi (\A_{(\Ho,\C)},t)=\# ([-s,s]^\ell\backslash \bigcup \A_{(\Ho,\C)} ).$$ Now, we construct a correspondence between an element of $[-s,s]^\ell\backslash \bigcup \A_{(\Ho,\C)}$ and a proper vertex coloring of the edge colored hypergraph $(\Ho,\C)$.

Let $(m_1,\ldots ,m_\ell )\in [-s,s]^\ell\backslash \bigcup \A$ and define a vertex coloring of $(\Ho,\C)$ by the function $c:[\ell]\to [-s,s]$ defined by $c(i)=m_i$. Fix an arbitrary $\lambda\in\Lambda$. Then the subspace corresponding to $\lambda$ is $$X_\lambda=\bigcap\limits_{e\in \C^{-1}(\lambda)}\nu (e)\ \ .$$ Since $(m_1,\ldots ,m_\ell )\notin \bigcup \A_{(\Ho,\C)}$ then there exists a connected component $K$ of $ \C^{-1}(\lambda)$ such that $(m_1,\ldots,m_\ell) \notin \nu (K)$. Suppose that $\nu (K)=\{v\in V| v_{k_1}=v_{k_2}=\cdots =v_{k_s}\}$. Since $(m_1,\ldots,m_\ell) \notin \nu (K)$ there exists $i,j\in\{k_1,\ldots,k_s\}$ such that $c(i)=m_i\neq m_j=c(j)$. Therefore, $c$ is a proper vertex coloring of $(\Ho,\C)$.

Now, let $c:[\ell]\to [-s,s]$ be a proper vertex coloring of the edge colored hypergraph $(\Ho,\C)$. Define an element of $[-s,s]^\ell$ by the vector $(c(1),c(2),\ldots,c(\ell))$. Then for every color $\lambda\in \Lambda$ there exists a connected component $K$ of $ \C^{-1}(\gl)$ with $i,j\in K$ such that $c(i)\neq c(j)$. Thus, the vector $(c(1),c(2),\ldots,c(\ell))$ is not in the subspace $\nu (K)$ and hence not in the subspace $X_\lambda$. Since $\lambda$ was chosen arbitrarily we know that the vector $(c(1),c(2),\ldots,c(\ell))$ can not be an element of $\bigcup \A_{(\Ho,\C)}$.  \end{proof}



\section{The complex complement}\label{complement}

In this section we turn to the topology of the complex complement of a subspace arrangement embedded in the braid arrangement. First we study the relative atomic complex of the intersection lattice defined by Yuzvinsky in \cite{Yuz-SmalRat}.  We then review the Lie coalgebra of a differential graded algebra and it's spectral sequence, developed by Sinha and Walter in \cite{SW06}. Next we apply this Lie coalgebra to Yuzvinsky's relative atomic complex in order to produce subspace arrangements with non-trivial Massey products. We conclude by examining the special case of $k$-equal arrangements


\subsection{Yuzvinsky's relative atomic complex}

The relative atomic complex is a rational model for subspace arrangements, defined by Yuzvinsky in \cite{Yuz-SmalRat} and used by Feichtner and Yuzvinsky in \cite{FY-Formal} to prove that the complement of a subspace arrangement with a geometric intersection lattice is formal.

Let $\A=\{ X_1,\ldots ,X_n\}$ be a subspace arrangement and fix an order on these elements, $X_1<X_2<\cdots <X_n$.  We associate the integer $s$ with the subspace $X_s$. Then we will use $\gs= \{i_1, \ldots, i_k\}\subseteq \{1,\ldots ,n\}$ to denote a subset of atoms in the intersection lattice of $\A$ such that $1\leq i_1< i_2 < \cdots < i_k$. Let $D_{\mc{A}}$ be the differential graded algebra over $\QQ$ with a generator $a_{\gs}$ in degree $2 \, \text{codim} \bigvee \gs - | \gs | $ for each $\gs$.  The differential is defined by
\begin{gather} \label{D;d}
d a_{\gs} = \sum_{j: \bigvee \gs \bs i_j = \bigvee \gs} (-1)^j a_{\gs \bs i_j},
\end{gather}
and the product structure is defined by
\begin{gather} \label{D;product}
a_{\gs} a_{\gt} = 
\begin{cases}
(-1)^{\ge(\gs,\gt)} a_{\gs \cup \gt} & \codim \bigvee \gs + \codim \bigvee \gt = \codim \bigvee (\gs \cup \gt) \\
0 & \text{otherwise},
\end{cases}
\end{gather}
where $\ge(\gs,\gt)$ is the sign associated with the permutation that re-orders the linearly ordered $\gs \cup \gt$ so that the elements of $\gt$ come after that of $\gs$. 

Feichtner and Yuzvinsky in \cite{FY-Formal} prove the next theorem by showing that this relative atomic complex $D_\A$ is quasi-isomorphic to the rational models of the complex complement of $\A$ developed by De Concini and Procesi in \cite{DCP95}.

\begin{theorem}[Feichtner, Yuzvinsky]

Let $\A$ be a subspace arrangement in $V\cong \CC^\ell$. Then the relative atomic complex $D_\A$ is a rational model for the complex complement $M(\A)$.

\end{theorem}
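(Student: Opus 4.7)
The plan is to follow Feichtner and Yuzvinsky and prove the theorem by exhibiting a zig-zag of quasi-isomorphisms from $D_\A$ to a rational DGA model of $M(\A)$ already known to compute its rational homotopy type, namely the one arising from the De Concini--Procesi wonderful compactification. Once such a quasi-isomorphism is in hand, the modeling property of the DCP DGA transports to $D_\A$ and the theorem follows.

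First I would recall the DCP construction. One iteratively blows up $V$ along (proper transforms of) the subspaces in $L(\A)\bs\{V\}$, in an order refining reverse inclusion, obtaining a smooth variety $Y_\A$ in which $M(\A)$ sits as the complement of a simple normal crossings divisor $\bigcup_X D_X$ indexed by $X\in L(\A)\bs\{V\}$. The logarithmic Deligne DGA $\Omega^*(Y_\A,\log)$ is then a rational model of $M(\A)$; its generators $\omega_X$ sit in degree $2\codim X - 1$, with relations prescribed by the incidences of the divisors.

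Next I would define a DGA map $\phi:D_\A \to \Omega^*(Y_\A,\log)$ by $\phi(a_\sigma) = \omega_{X_{i_1}}\wedge\cdots\wedge\omega_{X_{i_k}}$ for $\sigma=\{i_1<\cdots<i_k\}$. The degree matches because $\sum_j(2\codim X_{i_j}-1) = 2\sum_j \codim X_{i_j} - |\sigma|$, which equals $2\codim\bigvee\sigma-|\sigma|$ exactly when the multiplicativity condition in (\ref{D;product}) holds; in the non-multiplicative case the wedge product vanishes in $\Omega^*(Y_\A,\log)$ by a residue argument, matching the "otherwise zero" clause. The log differential applied to the wedge yields, after a Koszul sign computation, exactly the alternating sum in (\ref{D;d}), since the terms for which removing $i_j$ strictly lowers the join vanish identically on the DCP side.

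Finally, I would show $\phi$ is a quasi-isomorphism by filtering both sides by the join $\bigvee\sigma\in L(\A)$. Both $E_1$-pages reduce, via the Goresky--MacPherson formula, to a direct sum
\begin{gather*}
\bigoplus_{X\in L(\A)} \tilde{H}^{*-1}\bigl(\Delta(\hat 0,X);\QQ\bigr)[-2\codim X + 2],
\end{gather*}
with $\phi$ inducing the identity on each summand, and degeneration at $E_1$ on both sides closes the argument. The main obstacle will be the sign and combinatorics bookkeeping in the previous step: one must verify that the subsets $\{j:\bigvee(\sigma\bs i_j)=\bigvee\sigma\}$ appearing in (\ref{D;d}) match precisely the non-vanishing terms produced by the log differential, and that the Koszul signs agree with the $(-1)^j$ prefactors in $d a_\sigma$. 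Once this matching is settled, the lattice filtration and Goresky--MacPherson identification complete the proof.
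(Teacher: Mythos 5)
The paper does not actually prove this statement: it is imported from Feichtner and Yuzvinsky, and the only indication of proof the paper gives is the remark that they establish a quasi-isomorphism between $D_\A$ and the De Concini--Procesi rational model. Your overall route is therefore exactly the one the paper points to, so at the level of strategy you and the paper agree.

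As written, though, your construction has a concrete gap: the target of your map $\phi$ is not well defined. The logarithmic de Rham complex of the wonderful compactification does not contain generators $\omega_X$ of degree $2\codim X-1$; the boundary components $D_X$ are divisors, so logarithmic forms along them are $1$-forms regardless of $\codim X$ in $V$. The degree-$(2\codim X-1)$ generators live in the purely combinatorial CDGA that De Concini and Procesi extract from the weight filtration of the log complex via Morgan's theory (a complex built from nested sets with markings), and it is that combinatorial algebra, not $\Omega^*(Y_\A,\log)$ itself, to which Feichtner and Yuzvinsky compare $D_\A$. Relatedly, if the $\omega_X$ were honest closed logarithmic forms then every $\phi(a_\sigma)$ would be closed, which is incompatible with the differential (\ref{D;d}) being nonzero; the nontrivial differential on generators is again a feature of the Morgan/DCP model, not of an algebra of closed log forms. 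So the central step you flag as ``the main obstacle'' --- matching (\ref{D;d}) and (\ref{D;product}) with the target --- cannot even be set up until the DCP combinatorial model is stated precisely and the comparison map is built into it rather than into the log complex. Your final step, filtering by $\bigvee\sigma$ and identifying the associated graded with the Goresky--MacPherson summands $\tilde H^{*}\bigl(\Delta(\hat 0,X);\QQ\bigr)$, is sound in spirit and is essentially how Yuzvinsky verifies that the (relative) atomic complex has the correct cohomology.
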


We return to the case of an edge colored hypergraphic arrangement with hypergraph $(\Ho,\C)$ and edge colors $\Lambda$. Choose an order on the colors $\Lambda =\{ \lambda_1,\ldots ,\lambda_n\}$, $\lambda_1<\cdots <\lambda_n$, hence ordering the atoms of the intersection lattice $L(\A_{(\Ho,\C)})$. For an ordered subset of colors $\gG =\{\lambda_{i_1},\ldots ,\lambda_{i_k} \} \subseteq \gL$, we abbreviate the algebra generator $a_{i_1i_2\cdots i_k}$ by $a_{\gG}$.  The next lemma follows directly from Definition \ref{D;refine} and Equation (\ref{D;d}).

\begin{lemma} \label{L;d}

Let $\gG \subseteq \gL$, then $da_{\gG}  = \sum\limits (-1)^{j} a_{\gG \bs \gl}$, where the sum is over all $\gl \in \gG$ such that $\gG \bs \gl \Supset \gl $ and $\gl$ is the $j^{\text{th}}$ element of $\gG$.

\end{lemma}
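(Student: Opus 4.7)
The plan is to show that Lemma \ref{L;d} is just Equation~(\ref{D;d}) rewritten in the hypergraphic language, using the dictionary between joins of atoms and refinement of color sets that was established in Section~\ref{ech}. First I would identify the atoms: since the edge colored hypergraphic arrangement $\A_{(\Ho,\C)}$ has one atom $X_\gl$ for each color $\gl \in \gL$, an ordered subset of atoms is exactly an ordered subset $\gG = \{\gl_{i_1},\dots,\gl_{i_k}\} \subseteq \gL$ and the generator $a_{\gs}$ is what we have abbreviated to $a_{\gG}$.

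Next, I would rewrite the join condition in Equation~(\ref{D;d}). The join in $L(\A_{(\Ho,\C)})$ is intersection of subspaces, so
\begin{equation*}
\bigvee \gG \;=\; \bigcap_{\gl' \in \gG} X_{\gl'} \;=\; \bigcap_{e\in \C^{-1}(\gG)} \nu(e).
\end{equation*}
Hence $\bigvee(\gG \bs \gl) = \bigvee \gG$ is equivalent to the containment
\begin{equation*}
\bigcap_{e\in \C^{-1}(\gG\bs\gl)} \nu(e) \;\subseteq\; \bigcap_{e\in \C^{-1}(\gl)} \nu(e),
\end{equation*}
since the reverse inclusion is automatic. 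By the characterization of $\Subset$ in terms of intersections of the $\nu(e)$'s given right after Definition~\ref{D;refine}, this containment is exactly $\C^{-1}(\gl) \Subset \C^{-1}(\gG \bs \gl)$, which in the paper's abbreviated color notation is $\gG \bs \gl \Supset \gl$.

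Finally I would substitute this reformulation back into Equation~(\ref{D;d}): the index $j$ in the sum is, by definition, the position of $\gl_{i_j}$ in the ordered set $\gG$, so replacing the join condition by its hypergraphic translation yields exactly
\begin{equation*}
d a_{\gG} \;=\; \sum_{\substack{\gl \in \gG \\ \gG\bs\gl \,\Supset\, \gl}} (-1)^{j} \, a_{\gG \bs \gl},
\end{equation*}
with $\gl$ the $j^{\text{th}}$ element of $\gG$. The only thing to be careful about is making sure the ordering convention for signs matches: in Equation~(\ref{D;d}) the sign is $(-1)^j$ where $j$ is the position of the atom being removed inside the ordered tuple $\gs$, and under the identification atoms $\leftrightarrow$ colors this is exactly the position of $\gl$ inside $\gG$. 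There is no real obstacle here — the lemma is a direct translation — but the one point requiring care is that the dictionary ``join equals intersection'' and ``containment of intersections equals $\Subset$'' is applied in the correct direction, which is why I would spell out the equivalence explicitly before collecting terms.
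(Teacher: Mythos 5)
Your proposal is correct and is exactly the argument the paper intends: the paper gives no written proof, stating only that the lemma ``follows directly from Definition \ref{D;refine} and Equation (\ref{D;d}),'' and your translation of the join condition $\bigvee(\gG\bs\gl)=\bigvee\gG$ into $\gG\bs\gl\Supset\gl$ via the characterization of $\Subset$ by containment of intersections is the right (and correctly oriented) dictionary. Nothing further is needed.
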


By Lemma \ref{L;codim2} and Equation (\ref{D;product}) we have the following lemma.

\begin{lemma} \label{L;product}

Let $\gG, \gG' \subset \gL$.  The product $a_{\gG} a_{\gG'} = (-1)^{\ge(\gG, \gG')} a_{\gG \cup \gG'}$ if and only if $\gG$ and $\gG'$ are multiplicative.

\end{lemma}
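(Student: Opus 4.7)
The plan is to unwind the definitions so that the two conditions reduce to the same numerical equality. First, I would observe that in the intersection lattice $L(\A_{(\Ho,\C)})$, which is ordered by reverse inclusion, the join of a collection of atoms $\{X_\gamma \mid \gamma \in \gG\}$ is precisely the intersection $\bigcap_{\gamma \in \gG} X_\gamma$, and likewise $\bigvee (\gG \cup \gG') = \bigcap_{\gamma \in \gG \cup \gG'} X_\gamma$. This is the bridge that converts the codimension condition appearing in (\ref{D;product}) into a statement about the subspaces $X_\gamma$.

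Next, by Equation (\ref{D;product}), the product $a_{\gG} a_{\gG'}$ equals $(-1)^{\ge(\gG, \gG')} a_{\gG \cup \gG'}$ if and only if $\codim \bigvee \gG + \codim \bigvee \gG' = \codim \bigvee(\gG \cup \gG')$, and otherwise it is zero. Using the translation above, this condition becomes
$$\codim\bigcap_{\gamma \in \gG} X_\gamma + \codim \bigcap_{\gamma' \in \gG'} X_{\gamma'} = \codim \bigcap_{\gamma \in \gG \cup \gG'} X_\gamma,$$
which is exactly the defining property of $\gG$ and $\gG'$ being multiplicative. Since the sign $(-1)^{\ge(\gG,\gG')}$ is prescribed by the definition of the product in $D_{\A}$, no further verification is needed for the sign.

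Lemma \ref{L;codim2} is what makes this usable in practice: it expresses each side of the displayed equality as a sum $\sum (|V_i|-1)$ over connected components of the relevant subhypergraphs, so multiplicativity can be checked by counting vertices in $\C^{-1}(\gG)$, $\C^{-1}(\gG')$, and $\C^{-1}(\gG \cup \gG')$. There is no substantive obstacle here; the lemma serves as a dictionary entry translating the DGA product in $D_{\A_{(\Ho,\C)}}$ into the combinatorics of the edge colored hypergraph, and the verification is pure definition-chasing using the two inputs cited in the preamble to the statement.
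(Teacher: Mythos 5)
Your proof is correct and matches the paper's approach: the paper gives no separate argument, stating only that the lemma follows from Lemma \ref{L;codim2} and Equation (\ref{D;product}), which is exactly the definition-unwinding you carry out. The identification of $\bigvee\gG$ with $\bigcap_{\gamma\in\gG}X_\gamma$ and the observation that the codimension condition in (\ref{D;product}) is verbatim the definition of multiplicativity is all that is needed.
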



\subsection{Sinha and Walter's Lie coalgebras}

The machinery developed by Sinha and Walter in \cite{SW06} provides a computationally friendly way to prove the existence of certain non-vanishing Massey products.  They construct a pair of Quillen adjoint functors from the category of rational commutative differential graded algebras (DGA) to the category of rational differential graded Lie coalgebras (DGE).  We apply their functor $\mc{E} : \text{DGA} \to \text{DGE}$ to Yuzvinsky's relative atomic complexes described above.

Let Graphs(\(n\)) denote the vector space spanned by acyclic directed graphs with \(n\) labeled vertices.
\begin{definition} \label{D:Eil}
Let \(W\) be a graded vector space.  The co-free Lie co-algebra on \(W\) is
\begin{gather}
\bb{E}(W) := \bigoplus_n  \left( ( \text{Graphs}(n)  / \sim ) \otimes_{\gS_n} W^{\otimes n}  \right) .
\end{gather}
Here \(\gS_n\) is the symmetric group on \(n\) letters which acts diagonally, and the equivalence relation on Graphs(\(n\)) is generated by 

\begin{align*}
\text{(arrow-reversing)}\qquad & \qquad
\begin{xy}                           
(0,-2)*+UR{\scriptstyle a}="a",    
(3,3)*+UR{\scriptstyle b}="b",     
"a";"b"**\dir{-}?>*\dir{>},         
(1.5,-5),{\ar@{. }@(l,l)(1.5,6)},
?!{"a";"a"+/va(210)/}="a1",
?!{"a";"a"+/va(240)/}="a2",
?!{"a";"a"+/va(270)/}="a3",
"a";"a1"**\dir{-},  "a";"a2"**\dir{-},  "a";"a3"**\dir{-},
(1.5,6),{\ar@{. }@(r,r)(1.5,-5)},
?!{"b";"b"+/va(30)/}="b1",
?!{"b";"b"+/va(40)/}="b2",
?!{"b";"b"+/va(50)/}="b3",
"b";"b1"**\dir{-},  "b";"b2"**\dir{-},  "b";"b3"**\dir{-},
\end{xy}\ =\ \ -  
\begin{xy}                           
(0,-2)*+UR{\scriptstyle a}="a",    
(3,3)*+UR{\scriptstyle b}="b",     
"a";"b"**\dir{-}?<*\dir{<},         
(1.5,-5),{\ar@{. }@(l,l)(1.5,6)},
?!{"a";"a"+/va(210)/}="a1",
?!{"a";"a"+/va(240)/}="a2",
?!{"a";"a"+/va(270)/}="a3",
"a";"a1"**\dir{-},  "a";"a2"**\dir{-},  "a";"a3"**\dir{-},
(1.5,6),{\ar@{. }@(r,r)(1.5,-5)},
?!{"b";"b"+/va(30)/}="b1",
?!{"b";"b"+/va(40)/}="b2",
?!{"b";"b"+/va(50)/}="b3",
"b";"b1"**\dir{-},  "b";"b2"**\dir{-},  "b";"b3"**\dir{-},
\end{xy} \\
\text{(Arnold)}\qquad & \qquad
\begin{xy}                           
(0,-2)*+UR{\scriptstyle a}="a",    
(3,3)*+UR{\scriptstyle b}="b",   
(6,-2)*+UR{\scriptstyle c}="c",   
"a";"b"**\dir{-}?>*\dir{>},         
"b";"c"**\dir{-}?>*\dir{>},         
(3,-5),{\ar@{. }@(l,l)(3,6)},
?!{"a";"a"+/va(210)/}="a1",
?!{"a";"a"+/va(240)/}="a2",
?!{"a";"a"+/va(270)/}="a3",
?!{"b";"b"+/va(120)/}="b1",
"a";"a1"**\dir{-},  "a";"a2"**\dir{-},  "a";"a3"**\dir{-},
"b";"b1"**\dir{-}, "b";(3,6)**\dir{-},
(3,-5),{\ar@{. }@(r,r)(3,6)},
?!{"c";"c"+/va(-90)/}="c1",
?!{"c";"c"+/va(-60)/}="c2",
?!{"c";"c"+/va(-30)/}="c3",
?!{"b";"b"+/va(60)/}="b3",
"c";"c1"**\dir{-},  "c";"c2"**\dir{-},  "c";"c3"**\dir{-},
"b";"b3"**\dir{-}, 
\end{xy}\ + \                             
\begin{xy}                           
(0,-2)*+UR{\scriptstyle a}="a",    
(3,3)*+UR{\scriptstyle b}="b",   
(6,-2)*+UR{\scriptstyle c}="c",    
"b";"c"**\dir{-}?>*\dir{>},         
"c";"a"**\dir{-}?>*\dir{>},          
(3,-5),{\ar@{. }@(l,l)(3,6)},
?!{"a";"a"+/va(210)/}="a1",
?!{"a";"a"+/va(240)/}="a2",
?!{"a";"a"+/va(270)/}="a3",
?!{"b";"b"+/va(120)/}="b1",
"a";"a1"**\dir{-},  "a";"a2"**\dir{-},  "a";"a3"**\dir{-},
"b";"b1"**\dir{-}, "b";(3,6)**\dir{-},
(3,-5),{\ar@{. }@(r,r)(3,6)},
?!{"c";"c"+/va(-90)/}="c1",
?!{"c";"c"+/va(-60)/}="c2",
?!{"c";"c"+/va(-30)/}="c3",
?!{"b";"b"+/va(60)/}="b3",
"c";"c1"**\dir{-},  "c";"c2"**\dir{-},  "c";"c3"**\dir{-},
"b";"b3"**\dir{-}, 
\end{xy}\ + \                              
\begin{xy}                           
(0,-2)*+UR{\scriptstyle a}="a",    
(3,3)*+UR{\scriptstyle b}="b",   
(6,-2)*+UR{\scriptstyle c}="c",    
"a";"b"**\dir{-}?>*\dir{>},         
"c";"a"**\dir{-}?>*\dir{>},          
(3,-5),{\ar@{. }@(l,l)(3,6)},
?!{"a";"a"+/va(210)/}="a1",
?!{"a";"a"+/va(240)/}="a2",
?!{"a";"a"+/va(270)/}="a3",
?!{"b";"b"+/va(120)/}="b1",
"a";"a1"**\dir{-},  "a";"a2"**\dir{-},  "a";"a3"**\dir{-},
"b";"b1"**\dir{-}, "b";(3,6)**\dir{-},
(3,-5),{\ar@{. }@(r,r)(3,6)},
?!{"c";"c"+/va(-90)/}="c1",
?!{"c";"c"+/va(-60)/}="c2",
?!{"c";"c"+/va(-30)/}="c3",
?!{"b";"b"+/va(60)/}="b3",
"c";"c1"**\dir{-},  "c";"c2"**\dir{-},  "c";"c3"**\dir{-},
"b";"b3"**\dir{-}, 
\end{xy}\ =\ 0                             
\end{align*}

\end{definition}

If \(W\) is a DGA there are two differentials on \( \bb{E}(W)\), the first of which is denoted \(d_{W}\) and is the canonical extension of the differential on \(W\).  The second differential, denoted \(d_{\gm}\), is defined using the combinatorics of graphs.  We begin by defining a map \(\gm_e\), for an edge \(e\) of a graph. 

\begin{definition}
Let $g$ be a homogeneous element of $\bb{E}(W)$.  For every edge $e$ of the underlying graph of $g$ we may construct a new ordered labeled graph $\mu_e(g)$
as follows.

Pick a representative of $g$ up to the $\Sigma_n$ action
in which edge $e$ goes from 
vertex number $1$ to vertex number $2$, with the first two entries
of the associated tensor being $a$ and $b$.
Contract the edge from $1$ to $2$ in this representative to  a vertex 
which is then given the number
$1$ and first entry in the tensor of  $(-1)^{|a|} (a b)$ where $|a|$ denotes the degree of $a$.
In this operation, the ordering of all other vertices in the graph is 
shifted down by one to make up for the now missing 2.
$$\mu_e:
\overtie{
\begin{xy}                           
(0,-2)*+UR{\scriptstyle 1}="a",    
(3,3)*+UR{\scriptstyle 2}="b",     
"a";"b"**\dir{-}?>*\dir{>} ?(.4)*!RD{\scriptstyle e\,},         
(1.5,-5),{\ar@{. }@(l,l)(1.5,6)},
?!{"a";"a"+/va(210)/}="a1",
?!{"a";"a"+/va(240)/}="a2",
"a";"a1"**\dir{-},  "a";"a2"**\dir{-},  
(1.5,6),{\ar@{. }@(r,r)(1.5,-5)},
?!{"b";"b"+/va(30)/}="b2",
?!{"b";"b"+/va(60)/}="b3",
"b";"b2"**\dir{-},  "b";"b3"**\dir{-},
\end{xy}
}{ a\otimes  b\otimes\cdots}\ \longmapsto\ 
(-1)^{|a|}\!\!\!\!\!\!\!\overtie{
\begin{xy}
(0,1)*+UR{\scriptstyle 1}="a",
(0,-3),{\ar@{. }@(l,l)(0,4)},
?!{"a";"a"+/va(210)/}="a1",
?!{"a";"a"+/va(240)/}="a2",
"a";"a1"**\dir{-},  "a";"a2"**\dir{-},  
(0,4),{\ar@{. }@(r,r)(0,-3)},
?!{"a";"a"+/va(30)/}="b2",
?!{"a";"a"+/va(60)/}="b3",
"a";"b2"**\dir{-},  "a";"b3"**\dir{-},
\end{xy}}{ (ab)\otimes \cdots}
$$
\end{definition}
We then define the differential \(d_{\gm}\) as follows
\begin{gather}
d_\mu (g) = 
\sum_{e} \mu_e (g).
\end{gather}
	
Since \(\bb{E}(W)\) is a bi-complex, its homology with respect to the total differential can be computed from the spectral sequence given by filtering it by columns.  In this filtration the \(d_0\) is the differential induced by \(d_W\).  We refer to this spectral sequence as the Sinha-Walter spectral sequence of $W$.

The bi-complex \(\bb{E}(W)\) also has a Lie co-bracket defined combinatorially by removing edges.  As we do not make use of it here we omit the definition.

For ``long $n$-graphs" we let $a_1|a_2|\cdots |a_n$ denote the equivalence class of
$$  {\begin{xy}
(0,-2)*+UR{\scriptstyle 1}="1",
(3.5,3)*+UR{\scriptstyle 2}="2",
(7,-2)*+UR{\scriptstyle 3}="3",
(10.5,3)*+UR{\scriptstyle 4}="4",
(14,-2)*+UR{\scriptstyle n-1}="5",
(17.5,3)*+UR{\scriptstyle n}="6",
"1";"2"**\dir{-}?>*\dir{>},
"2";"3"**\dir{-}?>*\dir{>},
"3";"4"**\dir{-}?>*\dir{>},
"4";"5"**\dir{.},
"5";"6"**\dir{-}?>*\dir{>},
\end{xy}{\textstyle \bigotimes\,} {a_1\otimes a_2\otimes \cdots \otimes a_n}} $$ in \(\bb{E}(W)\).

We use the following proposition to compute non-trivial Massey products.  Let $W$ be a DGA and let $E_r^{p,q}$ be the $r^{\textrm{th}}$ page of the Sinha-Walter spectral sequence of $W$, with differential $d_r$.  

\begin{proposition} \label{P;MPs} 

If the $r^{th}$ order Massey product $\langle [a_1] , [a_2], \ldots , [a_r] \rangle$ is defined in $H^*(W)$ and $[ a_1 |  a_2 | \ldots | a_r ] $ is non-zero then $[ a_1 |  a_2 | \ldots | a_r ] $ survives to the $(r-1)^{st}$ page and
$$d_{r-1}( a_1|a_2| \ldots | a_r) \in \pm \big[ \langle [a_1] , [a_2], \ldots , [a_r] \rangle  \big].$$

\end{proposition}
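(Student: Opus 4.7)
The plan is to exploit the bi-complex structure of $\bb{E}(W)$, whose total differential is $d_W + d_\mu$, and to lift the long graph $a_1 | a_2 | \cdots | a_r$ page by page using a defining system for the Massey product. Since each $a_i$ is a cocycle, the long graph is a $d_0 = d_W$ cycle, so it represents a class on the $E_1$ page, and the hypothesis that $[a_1|a_2|\cdots|a_r]$ is non-zero ensures that this class is non-trivial on $E_1$.

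First I would compute $d_\mu(a_1|a_2|\cdots|a_r)$ directly from the definition: contracting the edge between vertices $i$ and $i{+}1$ produces a shorter long graph in which the two adjacent entries $a_i \otimes a_{i+1}$ are replaced by $(-1)^{|a_i|} a_i a_{i+1}$. Because the Massey product is defined, in particular $[a_i][a_{i+1}] = 0$ in $H^\ast(W)$, so each product $a_i a_{i+1}$ is a $d_W$-coboundary. Choose primaries $b_{i,i+1}$ with $d_W b_{i,i+1} = (-1)^{|a_i|} a_i a_{i+1}$ and insert them into the appropriate shorter long graph; this produces a correction whose $d_W$-image cancels the $d_\mu$-image of $a_1|a_2|\cdots|a_r$. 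That is exactly the datum that lifts the class from $E_1$ to $E_2$, and it realizes $d_1 = 0$ on our class.

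Inductively, a full defining system $\{b_{i,j}\}_{1 \le i \le j \le r,\,(i,j) \neq (1,r)}$ for $\langle [a_1],\ldots,[a_r]\rangle$ satisfies $d_W b_{i,j} = \sum_{k=i}^{j-1} (-1)^{|b_{i,k}|} b_{i,k}\, b_{k+1,j}$, which is precisely the combinatorial identity needed to cancel successive $d_\mu$-images by $d_W$-images of longer-graph corrections. At each stage the lift lives on a page where the long graphs appearing have one fewer vertex, matching the column-filtration on $\bb{E}(W)$; so after $r-2$ iterations the class survives to $E_{r-1}$. On the $(r-1)$st page the only surviving contribution to $d_{r-1}$ comes from fully contracting the graph down to a single vertex whose label is $\sum_{k=1}^{r-1}(-1)^{|b_{1,k}|} b_{1,k}\, b_{k+1,r}$, which is a representative of $\langle [a_1],\ldots,[a_r]\rangle$ by definition, giving the claimed relation up to sign.

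The main obstacle is sign bookkeeping: the $(-1)^{|a|}$ factor in each edge contraction must be reconciled with the standard Massey-product sign convention, the arrow-reversing relation, and the permutation signs coming from the $\Sigma_n$-action used to identify labeled graphs, and these combine nontrivially across $r-1$ contractions. A secondary subtlety is that the inductive lifts depend on the choice of defining system, but the ambiguity lives exactly in the Massey product indeterminacy, so the membership $d_{r-1}(a_1|\cdots|a_r) \in \pm[\langle [a_1],\ldots,[a_r]\rangle]$ is independent of choices; verifying this amounts to matching two standard indeterminacies and should be straightforward once the sign conventions are fixed.
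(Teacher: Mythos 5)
Your proposal is correct and follows essentially the same route as the paper, which simply cites the standard argument of McCleary (Theorem 8.31) relating Massey products to higher differentials via defining systems; your page-by-page lifting of the long graph using the defining system $\{b_{i,j}\}$, with $d_\mu$-contractions cancelled by $d_W$-images of corrected longer graphs, is exactly that argument transported to the column filtration of $\bb{E}(W)$. You have merely written out in detail what the paper leaves to the reference, including the correct observation that the defining-system ambiguity is absorbed into the Massey product indeterminacy.
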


The proof follows from the description of the Sinha-Walter spectral sequence by the same argument as given by McCleary \cite{McCleary} in Theorem 8.31.

Another reason to use the Sinha-Walter spectral sequence is that it allows us to compute rational homotopy groups.  Denote $\Hom(\pi_*(X) , \QQ)$ by $\pi^*(X)$.

\begin{proposition} \label{P;pi} (Sinha, Walter)

Let $W$ be a rational model for the finite complex $X$.  The Sinha-Walter spectral sequence for $W$ converges to $\pi^*(X)$.

\end{proposition}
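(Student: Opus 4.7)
The plan is to reduce this to Sinha--Walter's main theorem from \cite{SW06}. First, I would recall the classical theorem of Quillen that, for a simply connected space $X$ of finite rational type, the dual rational homotopy groups $\pi^*(X) = \Hom(\pi_*(X), \QQ)$ can be computed as the cohomology of a differential graded Lie coalgebra model of $X$. Sinha and Walter's functor $\mc{E}: \text{DGA} \to \text{DGE}$ is constructed precisely so that, when $W$ is a rational commutative DGA model for $X$, the coalgebra $\bb{E}(W)$ with total differential $d_W + d_\mu$ is such a Lie coalgebra model; the Arnold and arrow-reversing relations encode the Jacobi identity and antisymmetry of the induced cobracket, while $d_\mu$, defined via the edge contractions $\mu_e$, is dual to the bracket of the classical Quillen Lie model.

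Second, I would verify that $\bb{E}(W)$ really is a bicomplex, filtered by the number $n$ of vertices in the underlying graphs. Under this filtration the differential on $E_0$ is induced by $d_W$, while the $d_\mu$ contributions appear at later pages through graph-contraction combinatorics. Simple connectivity of $X$ (in the form of the appropriate connectivity hypothesis on $W$) together with finite rational type ensures that in any fixed total degree only finitely many columns contribute, so the filtration is complete, exhaustive, and regular. The standard convergence theorem for filtered bicomplexes then gives strong convergence of the Sinha--Walter spectral sequence to $H^*(\bb{E}(W))$.

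The principal obstacle is the identification $H^*(\bb{E}(W)) \cong \pi^*(X)$, which is the substantive content of Sinha--Walter's Quillen equivalence between rational commutative DGAs and rational differential graded Lie coalgebras. Once this identification is granted, the two steps above combine to produce the claim. In effect, the proof here is an appeal to \cite{SW06} together with the routine convergence theorem for filtered bicomplexes; no combinatorial input from edge colored hypergraphs enters at this stage, since the hypergraphic data will be relevant only when this spectral sequence is subsequently applied to Yuzvinsky's relative atomic complex $D_\A$ in order to detect non-trivial Massey products via Proposition \ref{P;MPs}.
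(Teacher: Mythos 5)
Your proposal is correct and takes essentially the same route as the paper: the paper states this proposition as a result of Sinha and Walter and gives no independent proof, which is exactly what your argument amounts to once everything is reduced to the main theorem of \cite{SW06} together with the routine convergence argument for the column filtration of the bicomplex $\bb{E}(W)$. The only difference is that you make explicit the simple-connectivity and finite-type hypotheses that the paper leaves implicit in the phrase ``rational model for the finite complex $X$.''
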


\begin{remark}
The Lie coalgebra $\mc{E}(A)$ is isomorphic to the Harrison complex of the commutative algebra $A$ equipped with the additional structure of a Lie coalgebra, see \cite{SW06}.  In the results that follow it suffices to use the standard Harrison complex, though we suspect that the combinatorics of the graphs used by Sinha and Walter as well as the Lie coalgebra structure will be useful for our goal of understanding the rational homotopy type of edge colored hypergraphic arrangements.
\end{remark}


\subsection{Lie coalgebras of the relative atomic complex}

In this section we show that the complement of an edge colored hypergraphic arrangement $\A_{(\Ho,\C)}$ admits non-trivial Massey products if certain conditions on the edge colored hypergraph $(\Ho,\C)$ are satisfied.

\begin{definition}\label{colorsystem}

Let $(\Ho,\C)$ be an edge colored hypergraph with edge colors $\Lambda$. Let $\lambda_1,\lambda_2,\lambda_3\in \Lambda$. We call $(\lambda_1,\lambda_2,\lambda_3)$ a \emph{Massey color system} if the pairs $\lambda_1$,  $\lambda_2$ and $\{\lambda_1,\lambda_2\}$, $\lambda_3$ are multiplicative and there exists $\lambda_4,\lambda_5\in \Lambda$ such that 
\begin{align}
\{\lambda_1,\lambda_2\} & \Supset \lambda_4 &
\{\lambda_2,\lambda_4\}& \not\Supset \lambda_1 &
\{\lambda_1,\lambda_4\}& \not\Supset \lambda_2   		\label{E;124} \\
\{\lambda_2,\lambda_3\} & \Supset \lambda_5 &
\{\lambda_3,\lambda_5\} & \not\Supset \lambda_2 & 
\{\lambda_2,\lambda_5\} & \not\Supset \lambda_3. 		\label{E;235}
\end{align} We call $\gl_4$ and $\gl_5$ \emph{embedded colors} for the triple $\gl_1, \gl_2,\gl_3$.

\end{definition}

\begin{theorem} \label{T;MP3}

Let $\A$ be a subspace arrangement embedded in $\A_{\ell}$ with corresponding edge colored hypergraph $(\Ho , \C)$ and edge colors $\gL$.  Assume that $( \gl_1, \gl_2, \gl_3)$ is a Massey color system with embedded colors $\gl_4$ and $\gl_5$ as in Definition \ref{colorsystem}.  Choose a linear order $\ll$ on the colors so that $\gl_1 \ll \gl_2 \ll \gl_3 \ll \gl_4 \ll \gl_5$.  Then in the Sinha-Walter spectral sequence for $D_{\A}$ 
$$d_2(a_{\gl_1} | a_{\gl_2} | a_{\gl_3} ) = \left[ a_{ \{\gl_1, \gl_2 , \gl_3, \gl_4\}} + a_{\{\gl_1, \gl_2 , \gl_3, \gl_5\}} \right] .$$

\end{theorem}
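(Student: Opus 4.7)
The plan is to combine the combinatorics of Yuzvinsky's relative atomic complex $D_\A$ with the standard spectral sequence description of $d_2$ as a ``zig--zag'' through $d_W$ and $d_\mu$. Concretely, I will verify the hypotheses of Proposition~\ref{P;MPs} by exhibiting explicit primitives for the two obstruction cocycles $a_{\{\lambda_1,\lambda_2\}}$ and $a_{\{\lambda_2,\lambda_3\}}$, and then lift these primitives to elements of $\mc{E}(D_\A)$ in order to compute $d_2$ on the long $3$--graph $a_{\lambda_1}|a_{\lambda_2}|a_{\lambda_3}$.

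The preparatory step is a bookkeeping exercise in $D_\A$. Since no color refines another, Lemma~\ref{L;d} yields $d\,a_{\lambda_i}=0$ for each $i$ as well as $d\,a_{\{\lambda_1,\lambda_2\}}=d\,a_{\{\lambda_2,\lambda_3\}}=0$. The non-inclusion conditions in \eqref{E;124}, combined with $\{\lambda_1,\lambda_2\}\Supset\lambda_4$, leave exactly one surviving term in Lemma~\ref{L;d}, giving
\[
d\,a_{\{\lambda_1,\lambda_2,\lambda_4\}} \;=\; \pm\, a_{\{\lambda_1,\lambda_2\}},
\]
and symmetrically $d\,a_{\{\lambda_2,\lambda_3,\lambda_5\}}=\pm\,a_{\{\lambda_2,\lambda_3\}}$ using \eqref{E;235}. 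A short codimension count (chaining the multiplicativity of $\lambda_1,\lambda_2$ with that of $\{\lambda_1,\lambda_2\},\lambda_3$) shows that $\lambda_2$ and $\lambda_3$ are multiplicative and that both $\{\lambda_1,\lambda_2,\lambda_4\},\lambda_3$ and $\lambda_1,\{\lambda_2,\lambda_3,\lambda_5\}$ are multiplicative. Lemma~\ref{L;product} then gives
\[
a_{\lambda_1}a_{\lambda_2}=\pm\,a_{\{\lambda_1,\lambda_2\}}, \qquad a_{\{\lambda_1,\lambda_2,\lambda_4\}}a_{\lambda_3}=\pm\,a_{\{\lambda_1,\lambda_2,\lambda_3,\lambda_4\}},
\]
and analogously $a_{\lambda_1}a_{\{\lambda_2,\lambda_3,\lambda_5\}}=\pm\,a_{\{\lambda_1,\lambda_2,\lambda_3,\lambda_5\}}$. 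In particular $[a_{\lambda_1}][a_{\lambda_2}]=0$ and $[a_{\lambda_2}][a_{\lambda_3}]=0$ in $H^\ast(D_\A)$, so the Massey triple product $\langle [a_{\lambda_1}],[a_{\lambda_2}],[a_{\lambda_3}]\rangle$ is defined.

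The $d_2$ computation then runs as a standard zig--zag. The long $3$--graph is $d_0$--closed because each $a_{\lambda_i}$ is $d$--closed, and
\[
d_\mu\bigl(a_{\lambda_1}|a_{\lambda_2}|a_{\lambda_3}\bigr) \;=\; \pm\, a_{\{\lambda_1,\lambda_2\}}\,|\,a_{\lambda_3} \;\pm\; a_{\lambda_1}\,|\,a_{\{\lambda_2,\lambda_3\}}
\]
is a $d_W$--boundary of
\[
\xi \;=\; \pm\, a_{\{\lambda_1,\lambda_2,\lambda_4\}}\,|\,a_{\lambda_3} \;\pm\; a_{\lambda_1}\,|\,a_{\{\lambda_2,\lambda_3,\lambda_5\}},
\]
with the signs fixed so that $d_W\xi$ equals the preceding expression. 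By the standard spectral sequence formula, $d_2[a_{\lambda_1}|a_{\lambda_2}|a_{\lambda_3}]=[d_\mu\xi]$, and applying $\mu_e$ to the single remaining edge of each summand converts $\xi$ into $\pm\,a_{\{\lambda_1,\lambda_2,\lambda_3,\lambda_4\}} \pm\, a_{\{\lambda_1,\lambda_2,\lambda_3,\lambda_5\}}$. I expect the main obstacle to be sign tracking rather than any substantive combinatorics: one must juggle the Koszul signs in $d_W$, the shuffling signs $\ge(\sigma,\tau)$ appearing in Yuzvinsky's product, the $(-1)^{\lvert a\rvert}$ factor in $\mu_e$, and the positions of $\lambda_4,\lambda_5$ dictated by the chosen order $\lambda_1\ll\lambda_2\ll\lambda_3\ll\lambda_4\ll\lambda_5$, and verify that these conspire to make both atoms appear with the same sign so that the formula in the theorem holds on the nose rather than only up to the Massey indeterminacy supplied by Proposition~\ref{P;MPs}.
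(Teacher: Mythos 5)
Your proposal is correct and follows essentially the same route as the paper's proof: use Lemma \ref{L;d} with the refinement conditions \eqref{E;124}, \eqref{E;235} to exhibit $\pm a_{\{\gl_1,\gl_2,\gl_4\}}$ and $\pm a_{\{\gl_2,\gl_3,\gl_5\}}$ as primitives of the obstruction cocycles (so the class survives to $E_2$), then use multiplicativity and Lemma \ref{L;product} to evaluate the zig--zag and land on $a_{\{\gl_1,\gl_2,\gl_3,\gl_4\}}+a_{\{\gl_1,\gl_2,\gl_3,\gl_5\}}$. The only difference is presentational: you make the $d_W$/$d_\mu$ zig--zag and the needed codimension checks explicit where the paper compresses them into citations of Lemmas \ref{L;d} and \ref{L;product}, and both arguments rely on the chosen order $\gl_1\ll\cdots\ll\gl_5$ to settle the signs.
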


Figure \ref{d2} illustrates the proof of Theorem \ref{T;MP3}, the solid arrow represents the differential $d_2$.

\begin{figure}[htbp] \label{d2}
\scalebox{.75}{
$\xymatrix@R=.25cm@C=.25cm{
&& \ar@{-}[dddddd] && \ar@{-}[dddddd] && \ar@{-}[dddddd] && 
\\
& 
a_{\gl_1} \, | \, a_{\gl_2} \, | \, a_{\gl_3 } \ar@{-->}[rr]  \ar[ddrrrr] && 
a_{\gl_1} a_{\gl_2} \, | \, a_{\gl_3}  + \; a_{\gl_1} \, | \, a_{\gl_2} a_{\gl_3}  &&
&& 
\\
& 
&&
&&
&& 
\\
&
&&
\ar@{-->}[uu] \ar@{-->}[rr]
a_{\{\gl_1, \gl_2, \gl_4\}} \, | \, a_{\gl_3} + a_{\gl_1} \, | \, a_{\{\gl_2, \gl_3, \gl_5 \}}
&&
a_{\{ \gl_1 , \gl_2 , \gl_3 , \gl_4 \}} + a_{\{ \gl_1 , \gl_2 , \gl_3 , \gl_5 \}}
&& 
\\
\ar@{-}[rrrrrrrr] &&&&&&&& \\
& 2 && 1 && 0  \\
&&&&&& \\
} _.
$
}
\end{figure}

\begin{proof}

Throughout this proof we use the ordering $\ll$ on the atoms. Lemma \ref{L;product} and the multiplicative conditions in Definition \ref{colorsystem} imply that the products $a_{\gl_1} a_{\gl_2}$, and $a_{\gl_2} a_{\gl_3}$ are non-zero.  Now by (\ref{E;124}), (\ref{E;235}), and Lemma \ref{L;d} we have $d_A(- a_{\{\gl_1, \gl_2 , \gl_4\}}) = a_{\{\gl_1, \gl_2\}}$ and $d_A( - a_{\{\gl_2 , \gl_3 , \gl_5\}})= a_{\{\gl_2 , \gl_3\}}$.  These two facts now show that $d_1\left( a_{\gl_1} | a_{\gl_2} | a_{\gl_3} \right) = 0$, thus $a_{\gl_1} | a_{\gl_2} | a_{\gl_3}$ survives to the second page of the spectral sequence.  Using the multiplicative conditions in Definition \ref{colorsystem} and Lemma \ref{L;product}, $a_{\{\gl_1, \gl_2 , \gl_4\}}a_{\gl_3} = a_{\{ \gl_1 , \gl_2 , \gl_3 , \gl_4 \}}$.  Similarly, $a_{\{\gl_2 , \gl_3 , \gl_5\}}a_{\gl_1}=a_{\{ \gl_1 , \gl_2 , \gl_3 , \gl_5 \}}$, and hence $d_2 \left(a_{\gl_1} | a_{\gl_2} | a_{\gl_3} \right) = \left[ a_{\{ \gl_1 , \gl_2 , \gl_3 , \gl_4 \}} + a_{\{ \gl_1 , \gl_2 , \gl_3 , \gl_5 \}} \right] $. \end{proof}

If the class $\left[ a_{ \{\gl_1, \gl_2 , \gl_3, \gl_4\}} + a_{\{\gl_1, \gl_2 , \gl_3, \gl_5\}} \right]$ is non-zero on the $E_2$ page then it guarantees that there is a non-trivial Massey product, though this is not a necessary condition.  If this class is decomposable in cohomology it will be zero on the $E_2$ page but it can still represent a non-trivial Massey product, see Example \ref{E;MPs}. Actually, up to equivalence in the quotient by the ideal generated by $\left[a_{\gl_1}\right]$ and $\left[a_{\gl_3}\right]$ the cohomology class $\left[ a_{ \{\gl_1, \gl_2 , \gl_3, \gl_4\}} + a_{\{\gl_1, \gl_2 , \gl_3, \gl_5\}} \right]$ is equal to the Massey triple product $\left[ \left< a_{\gl_1},a_{\gl_2},a_{\gl_3}\right> \right] $.

The following corollaries provide conditions to determine if there are non-trivial Massey products.  The first corollary is a consequence of Theorem \ref{T;MP3}, Proposition \ref{P;MPs}.
\begin{corollary}

Let $\A$ be an edge colored hypergraphic arrangement satisfying the conditions of Theorem \ref{T;MP3}.  If the cohomology class $\left[ a_{ \{\gl_1, \gl_2 , \gl_3, \gl_4\}} + a_{\{\gl_1, \gl_2 , \gl_3, \gl_5\}} \right]$ is non-zero then $M(\A)$ admits a non-trivial Massey product.

\end{corollary}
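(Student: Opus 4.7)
The plan is to combine Theorem \ref{T;MP3} directly with Proposition \ref{P;MPs}, after first verifying that the Massey triple product $\langle [a_{\gl_1}], [a_{\gl_2}], [a_{\gl_3}] \rangle$ is defined in $H^*(D_\A)$.

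First, I would check that the pairwise cup products vanish in cohomology. From the multiplicative conditions in Definition \ref{colorsystem} and Lemma \ref{L;product}, the chain-level products satisfy $a_{\gl_1} a_{\gl_2} = a_{\{\gl_1,\gl_2\}}$ and $a_{\gl_2} a_{\gl_3} = a_{\{\gl_2,\gl_3\}}$. Lemma \ref{L;d} together with the conditions (\ref{E;124}) and (\ref{E;235}) then imply that these equal $d(-a_{\{\gl_1,\gl_2,\gl_4\}})$ and $d(-a_{\{\gl_2,\gl_3,\gl_5\}})$ respectively, since the other potentially-contributing terms in the differentials are killed by the $\not\Supset$ hypotheses. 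Therefore $[a_{\gl_1}][a_{\gl_2}] = 0 = [a_{\gl_2}][a_{\gl_3}]$ in $H^*(D_\A)$, and the Massey triple product $\langle [a_{\gl_1}], [a_{\gl_2}], [a_{\gl_3}] \rangle$ is defined as a coset in $H^*(D_\A)$ modulo the indeterminacy ideal generated by $[a_{\gl_1}]$ and $[a_{\gl_3}]$.

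Next, I would invoke Theorem \ref{T;MP3}, which establishes that the bar element $a_{\gl_1} | a_{\gl_2} | a_{\gl_3}$ survives to the $E_2$ page of the Sinha-Walter spectral sequence for $D_\A$ and that
$$d_2(a_{\gl_1} | a_{\gl_2} | a_{\gl_3}) = \big[a_{\{\gl_1,\gl_2,\gl_3,\gl_4\}} + a_{\{\gl_1,\gl_2,\gl_3,\gl_5\}}\big].$$
Since the Massey product is defined and this bar element is non-zero and survives to the second page, Proposition \ref{P;MPs} applies with $r=3$ to yield
$$d_2(a_{\gl_1} | a_{\gl_2} | a_{\gl_3}) \in \pm\big[\langle [a_{\gl_1}], [a_{\gl_2}], [a_{\gl_3}] \rangle\big],$$
so the displayed cohomology class is a representative of the Massey product coset (up to sign).

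Finally, to conclude: by Feichtner-Yuzvinsky, $D_\A$ is a rational model for $M(\A)$, so $H^*(D_\A) \cong H^*(M(\A);\QQ)$, and a Massey product is non-trivial precisely when its defining coset does not contain zero. If the class $[a_{\{\gl_1,\gl_2,\gl_3,\gl_4\}} + a_{\{\gl_1,\gl_2,\gl_3,\gl_5\}}]$ is non-zero modulo the indeterminacy ideal (the natural reading of ``non-zero'' here, as emphasized in the discussion immediately preceding the statement), then the Massey coset cannot contain zero, and $\langle [a_{\gl_1}], [a_{\gl_2}], [a_{\gl_3}] \rangle$ is non-trivial in $H^*(M(\A);\QQ)$. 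The main subtlety, and the only non-mechanical step, is precisely this indeterminacy bookkeeping: a representative can be non-zero in $H^*(D_\A)$ while still lying in the indeterminacy ideal, so the hypothesis of the corollary must be interpreted modulo indeterminacy for the implication to be rigorous.
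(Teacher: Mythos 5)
Your proposal follows exactly the route the paper takes: the paper offers no separate proof for this corollary beyond noting it is ``a consequence of Theorem \ref{T;MP3}, Proposition \ref{P;MPs}," and you have simply filled in the details of that deduction (definedness of the triple product, survival to $E_2$, identification of $d_2$ with the Massey product). Your remark about interpreting ``non-zero" modulo the indeterminacy ideal is correct and consistent with the paper's own discussion immediately preceding the corollary.
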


The conditions of Theorem \ref{T;MP3} are easy to check using the edge colored hypergraph but checking that the desired cohomology class is non-zero is often quite difficult.  We supply sufficient conditions which guarantee that it's non-zero. The next corollary follows from Theorem \ref{T;MP3} and Lemma \ref{L;d}. 

\begin{corollary}\label{nocolors}

Let $\A$ be an edge colored hypergraphic arrangement satisfying the conditions of Theorem \ref{T;MP3}.  In addition, let $\Gamma :=\Lambda \bs \{\gl_1,\ldots ,\gl_5\}$.  If the set 
$$\{ \Psi \subseteq \Gamma \; | \; \Psi \Subset  \{\gl_1 , \gl_2, \gl_3 , \gl_4 \}  \text{ or } \Psi \Subset \{ \gl_1 , \gl_2 , \gl_3 , \gl_5 \}  \}$$
is empty then $M(\A)$ admits a non-trivial Massey product.

\end{corollary}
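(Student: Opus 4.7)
The plan is to invoke the preceding corollary: it suffices to prove that the cohomology class $[\omega] \in H^*(D_\A)$ is nonzero, where $\omega := a_{\{\gl_1,\gl_2,\gl_3,\gl_4\}} + a_{\{\gl_1,\gl_2,\gl_3,\gl_5\}}$. Since Theorem~\ref{T;MP3} already certifies that $\omega$ is a cocycle, the task reduces to showing $\omega$ is not exact in $D_\A$.

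To that end, suppose toward a contradiction that $dc = \omega$ for some $c \in D_\A$. Because $d$ decreases the cardinality of the index set by exactly one, only basis elements $a_\Delta$ of $c$ with $|\Delta|=5$ can contribute to either basis vector appearing in $\omega$. By Lemma~\ref{L;d}, any five-element $\Delta$ producing $a_{\{\gl_1,\gl_2,\gl_3,\gl_4\}}$ under $d$ must have the form $\Delta = \{\gl_1,\gl_2,\gl_3,\gl_4,\mu\}$ with $\{\gl_1,\gl_2,\gl_3,\gl_4\} \Supset \mu$. Applying the hypothesis to the singleton $\Psi = \{\mu\}$ rules out every $\mu \in \Gamma$, leaving $\mu = \gl_5$ as the only possibility; this is admissible because $\{\gl_2,\gl_3\} \Supset \gl_5$ forces $\{\gl_1,\gl_2,\gl_3,\gl_4\} \Supset \gl_5$. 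A parallel argument, using $\{\gl_1,\gl_2\} \Supset \gl_4$, shows that $\{\gl_1,\ldots,\gl_5\}$ is likewise the unique source of $a_{\{\gl_1,\gl_2,\gl_3,\gl_5\}}$ under $d$.

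Writing $c = \alpha\, a_{\{\gl_1,\ldots,\gl_5\}} + (\text{terms irrelevant to the two targets})$, the two pertinent coefficients of $dc$ come entirely from $\alpha\, d a_{\{\gl_1,\ldots,\gl_5\}}$. Applying the sign rule $(-1)^j$ of equation~(\ref{D;d}) to the sorted quintuple yields $d a_{\{\gl_1,\ldots,\gl_5\}} = \cdots + a_{\{\gl_1,\gl_2,\gl_3,\gl_5\}} - a_{\{\gl_1,\gl_2,\gl_3,\gl_4\}} + \cdots$, the two displayed summands arising from deleting the fourth and fifth entries respectively. Matching $dc = \omega$ in these two coordinates forces $\alpha = 1$ and $-\alpha = 1$ simultaneously, a contradiction. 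Hence $\omega$ is not a coboundary, $[\omega] \neq 0$ in $H^*(D_\A)$, and the preceding corollary delivers a nontrivial Massey product in $H^*(M(\A))$.

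The main obstacle is the uniqueness claim in the middle paragraph, namely that $\{\gl_1,\ldots,\gl_5\}$ is the only five-color set producing either target basis element under $d$. This is precisely where the hypothesis on $\Gamma$ is consumed; without it, other five-element color sets could contribute and the simultaneous sign conditions could conceivably be satisfied. Once this uniqueness is secured, the sign calculation and the appeal to the preceding corollary are essentially one-line arguments.
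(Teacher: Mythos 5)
Your strategy is the intended one, and the part of your argument that carries the real content is correct: only five--element color sets can map onto the two basis vectors under $d$, and applying the hypothesis to the singletons $\Psi=\{\mu\}$ with $\mu\in\Gamma$ leaves $a_{\{\gl_1,\ldots,\gl_5\}}$ as the unique candidate source. That is exactly where the hypothesis on $\Gamma$ is consumed, and it is the step the paper's one--line proof (``follows from Theorem \ref{T;MP3} and Lemma \ref{L;d}'') omits.

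The gap is in the final sign step, and it is genuine because your contradiction lives entirely in the signs. You assert that Theorem \ref{T;MP3} certifies that $\omega = a_{\{\gl_1,\gl_2,\gl_3,\gl_4\}}+a_{\{\gl_1,\gl_2,\gl_3,\gl_5\}}$ is a cocycle, but by the very Lemma \ref{L;d} you use elsewhere it is not: in the ordering $\gl_1\ll\cdots\ll\gl_5$ both $\gl_4$ and $\gl_5$ sit in the fourth position of their index sets, and since $\{\gl_1,\gl_2,\gl_3\}\Supset\gl_4$ and $\{\gl_1,\gl_2,\gl_3\}\Supset\gl_5$, each summand contributes $(-1)^4a_{\{\gl_1,\gl_2,\gl_3\}}$, so $d\omega$ contains $2a_{\{\gl_1,\gl_2,\gl_3\}}\neq 0$. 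The cocycle actually produced by the zig--zag in Theorem \ref{T;MP3} must therefore be $a_{\{\gl_1,\gl_2,\gl_3,\gl_4\}}-a_{\{\gl_1,\gl_2,\gl_3,\gl_5\}}$ up to an overall sign (this is also the only version compatible with the identity $[\,a_{\{\gl_1,\gl_2,\gl_3,\gl_4\}}\pm a_{\{\gl_1,\gl_2,\gl_3,\gl_5\}}\,]=[\,a_{\{\gl_1,\gl_3,\gl_4,\gl_5\}}\,]$ invoked in Example \ref{E;MPs}). With that relative sign your own computation $d a_{\{\gl_1,\ldots,\gl_5\}}=\cdots+a_{\{\gl_1,\gl_2,\gl_3,\gl_5\}}-a_{\{\gl_1,\gl_2,\gl_3,\gl_4\}}+\cdots$ no longer clashes: the two coordinates of $d(\alpha\, a_{\{\gl_1,\ldots,\gl_5\}})$ are $(-\alpha,\alpha)$, which matches $(1,-1)$ at $\alpha=-1$, so exactness is not ruled out and in fact hinges on the remaining possible summands $a_{\{\gl_2,\gl_3,\gl_4,\gl_5\}}$, $a_{\{\gl_1,\gl_3,\gl_4,\gl_5\}}$, $a_{\{\gl_1,\gl_2,\gl_4,\gl_5\}}$ of $d a_{\{\gl_1,\ldots,\gl_5\}}$, which your argument never examines. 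You need either to verify the sign in Theorem \ref{T;MP3} independently and redo the exactness analysis for the correct representative, or to establish the non--trivial Massey product without routing through non--vanishing of this particular class.
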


Next we illustrate a few edge colored hypergraphs that satisfy the conditions of Corollary \ref{nocolors} which can easily be generalized. One is an edge colored hypergraph where each color has only one edge. That is to say that the associated subspace arrangement is a hypergraph arrangement in the sense of Kozlov \cite{Koz-hyper}. The other is an edge colored hypergraph that admits a Massey color system where the underlying hypergraph is a graph, that is each edge contains exactly two vertices.

\begin{example} \label{E;MPs}

Let $(\Ho_1,\C_1)$ and $(\Ho_2,\C_2)$ be the edge colored hypergraphs of Figure \ref{ex1} and Figure \ref{ex2} respectively, where the edge color sets are given by $\gl_1=$ green, $\gl_2=$ red, $\gl_3=$ yellow, $\gl_4=$ blue, and $\gl_5=$ magenta. \begin{figure}[htbp]

\centerline {
\includegraphics[width=3in]{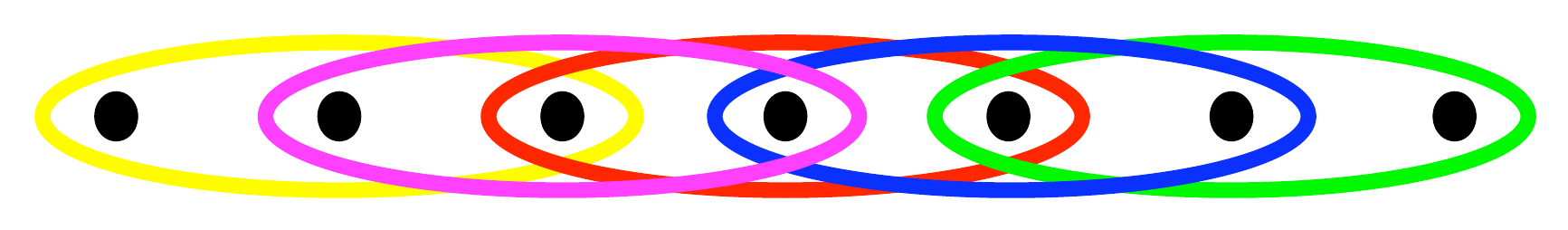}
}

\caption{$(\Ho_1,\C_1)$}
\label{ex1}
\end{figure} 

\begin{figure}[htbp]

\centerline {
\includegraphics[width=3in]{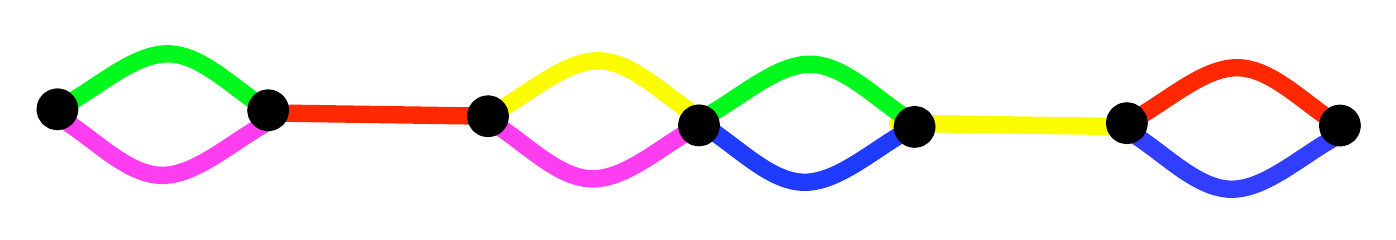}
}

\caption{$(\Ho_2,\C_2)$}
\label{ex2}
\end{figure}

Then for both of these edge colored hypergraphs, $\gl_1,\ \gl_2,\ \gl_3$ form a Massey color system with embedded colors $\gl_4$ and $\gl_5$. Further, both of these edge colored hypergraphs satisfy the hypothesis of Corollary \ref{nocolors}.  Notice also that, in both cases, the differential $d_2(a_{\gl_1} | a_{\gl_2} | a_{\gl_3} )$ is zero since
$$ [ a_{\lambda_{1234}} + a_{\lambda_{1235}} ] = [ a_{\lambda_{1345}} ]  = [ a_{\lambda_{14}} ] [ a_{\lambda_{35}} ] $$
and $d_1( a_{\lambda_{14}} | a_{\lambda_{35}} ) = [ a_{\lambda_{14}} ] [ a_{\lambda_{35}} ]$.

By Proposition \ref{P;pi} and a straight forward spectral sequence calculation we find that the rational homotopy groups for the examples above have the same ranks.  The following table gives the ranks in degrees less than eight. 
$$\begin{array}{c|c}
* & \text{rank} ( \pi^*)   \\ \hline
3 & 5 \\
4 & 4 \\ 
5 & 12  \\ 
6 & 16 \\ 
7 & 103 \\ 
\end{array}$$

\end{example}


\subsection{$k$-equal arrangements}

Let $(\Ho,\C)$ be the edge colored hypergraph where the set of edges is all possible subsets of size $k$ of the vertex set $[\ell]$ and each edge has its own color. Then the subspace arrangement corresponding to $(\Ho,\C)$ is known as the `$k$-equal arrangement' $\A_{\ell,k}$.
In this section we show that for some $k$ and $\ell$, all Massey products in the complex complement of the associated $k$-equal arrangements vanish.  Recall from Bj\"orner and Welker \cite{BW95} that the cohomology of the $k$-equal arrangement $\A_{\ell,k}$ is zero above degree $ \ell-1 + \lfloor \ell /k \rfloor (k-2) $ and the cohomology generators corresponding to the atoms are in degree $2k-3$.

\begin{theorem} \label{T;k-eq}

The arrangement $\A_{\ell,k}$ admits no non-trivial Massey products if
\begin{gather} \label{E;zero}
6k-9 > \ell + \lfloor \ell /k \rfloor (k-2).
\end{gather}

\end{theorem}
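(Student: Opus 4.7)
The plan is a short degree-counting argument based on the Bj\"orner--Welker vanishing recalled just before the statement. That result gives $H^d\bigl(M(\A_{\ell,k});\QQ\bigr)=0$ for every $d>D:=\ell-1+\lfloor\ell/k\rfloor(k-2)$, and the theorem's hypothesis is exactly the rewriting $6k-10>D$. It therefore suffices to show that every defined $r$-fold Massey product with $r\geq 3$ in the rational cohomology of $M(\A_{\ell,k})$ is forced to live in cohomological degree at least $6k-10$, since then it automatically represents the zero class.

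The first step is to pin down the minimum degree of a nontrivial cohomology class in the relative atomic complex $D_{\A_{\ell,k}}$. The atoms of $L(\A_{\ell,k})$ are the $k$-element subsets of $[\ell]$, each of codimension $k-1$, so every generator $a_\sigma$ with $|\sigma|=1$ sits in degree $2(k-1)-1=2k-3$. For a general $\sigma$ consisting of $m\geq 1$ distinct atoms, I will apply Lemma \ref{L;codim2} to conclude $\codim\bigvee\sigma\geq k+m-2$: the minimum is realized by the connected configuration in which all $m$ of the chosen $k$-subsets share a common $(k-1)$-subset, and splitting off any additional connected component only raises the codimension (each extra component contributes at least $k-1$ to $\sum(|V_i|-1)$ while costing only one unit of $m$). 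Consequently $a_\sigma$ has degree at least $2(k+m-2)-m=2k+m-4\geq 2k-3$, with equality only for atoms, so every nonzero class in $H^{>0}\bigl(M(\A_{\ell,k});\QQ\bigr)$ has degree at least $2k-3$.

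The second step is the standard degree formula for Massey products: a defined $r$-fold Massey product of classes of degrees $d_1,\ldots,d_r$ lies in degree $d_1+\cdots+d_r-(r-2)$. Combining this with $d_i\geq 2k-3$ and $r\geq 3$ yields a lower bound of $r(2k-3)-(r-2)=2r(k-2)+2$, which is strictly increasing in $r$ whenever $k\geq 3$ and therefore attains its minimum value $6k-10$ at $r=3$. (The case $k=2$ is the braid arrangement, which is formal; moreover the theorem's hypothesis already forces $\ell\leq 2$ in that case.) Since $6k-10>D$ by hypothesis, every Massey product sits in a degree where the cohomology is already zero, so every Massey product vanishes.

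The only non-routine step I expect is the degree estimate $2k+m-4$ for $a_\sigma$ with $|\sigma|=m$; this is the main obstacle, though it reduces immediately to the combinatorial observation that $m$ distinct $k$-subsets lying in a single connected hypergraph component span at least $k+m-1$ vertices, fed into the codimension count of Lemma \ref{L;codim2}.
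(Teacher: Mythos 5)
Your overall strategy --- bound the degree of any defined $r$-fold Massey product below by $6k-10$ and compare with the Bj\"orner--Welker vanishing degree $D=\ell-1+\lfloor\ell/k\rfloor(k-2)$ --- is the same degree count the paper performs, except that the paper runs it through the Sinha--Walter spectral sequence (the target of $d_r$ on $E_r^{-r,(r+1)(2k-3)}$ has degree $(r+1)(2k-3)-(r-1)\geq 6k-10$ for $r\geq 2$) rather than through the raw Massey-product degree formula. However, the step you yourself flag as the crux is wrong. The claim that $m$ distinct $k$-subsets forming a connected configuration span at least $k+m-1$ vertices, i.e.\ that $\codim\bigvee\gs\geq k+m-2$ when $\gs$ consists of $m$ atoms in one connected component, is false: take $k=3$ and let $\gs$ consist of all four $3$-element subsets of $\{1,2,3,4\}$. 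These are connected, span $4$ vertices, and $\codim\bigvee\gs=3$, whereas your bound demands $k+m-2=5$. The corresponding generator $a_{\gs}$ has degree $2\cdot 3-4=2<2k-3=3$, and $\ell=4$, $k=3$ satisfies the hypothesis $6k-9=9>4+1$, so this occurs inside the regime of the theorem. Your configuration of $m$ subsets through a common $(k-1)$-set maximizes, not minimizes, the vertex count among connected configurations with $m$ edges of size $k$; the true minimum grows only like $m^{1/k}$.

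What you actually need is a statement about cohomology, not about cochain generators: that $H^d(M(\A_{\ell,k});\QQ)=0$ for $0<d<2k-3$. This is true --- Bj\"orner and Welker prove the complex complement is $(2k-4)$-connected, which is what the paper's phrase about the atom generators lying in degree $2k-3$ is invoking --- but it does not follow from a chain-level degree bound on the $a_{\gs}$, since the relative atomic complex has many generators in lower (even negative) degrees that must cancel in cohomology. If you replace your second paragraph by a citation of this connectivity statement, the remainder of your argument (the formula $\sum d_i-(r-2)\geq 3(2k-3)-1=6k-10>D$ for $r\geq 3$, together with the separate disposal of $k=2$) is correct and recovers the paper's conclusion by an essentially equivalent, slightly more elementary route.
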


\begin{proof}
This theorem follows from degree counting.  Recall the differential in the Sinha-Walter spectral sequence
$$d_r : E_r^{-r,(r+1)(2k-3)} \to E_r^{0 , (r+1)(2k-3) - (r-1)}.$$
The degree of the target of this differential is greater than $\ell-1 + \lfloor \ell /k \rfloor (k-2)$ when 
$$ 2r(k-2) + 2k -1 > \ell + \lfloor \ell /k \rfloor (k-2). $$ 
Suppose that $r\geq 2$. Then by inequality (\ref{E;zero}) we have
$$2r(k-2) + 2k -1\geq 6k-9>\ell + \lfloor \ell /k \rfloor (k-2).$$
Thus for all $r \geq 2$ the target of the differential is zero. With Proposition \ref{P;MPs} this implies that any $r^{\textrm{th}}$ order Massey product has degree greater than $\ell-1 + \lfloor \ell /k \rfloor (k-2)$ and hence is zero.
\end{proof}

This theorem does not imply that such $k$-equal arrangements are formal but it does give evidence to support this claim.  The authors plan to use the ideas developed in this paper to further explore $k$-equal arrangements, Massey products, and the rational homotopy theory of edge-colored hypergraphic arrangements.


\bibliographystyle{amsplain}

\bibliography{bibsubspaces}

\def\cprime{$'$}
\providecommand{\bysame}{\leavevmode\hbox to3em{\hrulefill}\thinspace}
\providecommand{\MR}{\relax\ifhmode\unskip\space\fi MR }
\providecommand{\MRhref}[2]{%
  \href{http://www.ams.org/mathscinet-getitem?mr=#1}{#2}
}
\providecommand{\href}[2]{#2}
\begin{thebibliography}{10}

\bibitem{Ath}
Christos~A. Athanasiadis, \emph{Characteristic polynomials of subspace
  arrangements and finite fields}, Adv. Math. \textbf{122} (1996), no.~2,
  193--233.

\bibitem{Baskakov-03}
I.~V. Baskakov, \emph{Triple {M}assey products in the cohomology of
  moment-angle complexes}, Uspekhi Mat. Nauk \textbf{58} (2003), no.~5(353),
  199--200. \MR{MR2035723 (2004j:55013)}

\bibitem{Ber-hypergraphs}
Claude Berge, \emph{Hypergraphs}, North-Holland Mathematical Library, vol.~45,
  North-Holland Publishing Co., Amsterdam, 1989, Combinatorics of finite sets,
  Translated from the French. \MR{MR1013569 (90h:05090)}

\bibitem{Bj-Subspaces}
Anders Bj{\"o}rner, \emph{Subspace arrangements}, First European Congress of
  Mathematics, Vol.\ I (Paris, 1992), Progr. Math., vol. 119, Birkh\"auser,
  Basel, 1994, pp.~321--370. \MR{MR1341828 (96h:52012)}

\bibitem{BLY-k-equal}
Anders Bj{\"o}rner, L{\'a}szl{\'o} Lov{\'a}sz, and Andrew Yao, \emph{Linear
  decision trees: Volume estimates and topological bounds}, Proc. 24th ACM
  Symp. on Theory of Computing, ACM Press, New York, 1992, pp.~170--177.

\bibitem{BW95}
Anders Bj{\"o}rner and Volkmar Welker, \emph{The homology of ``{$k$}-equal''
  manifolds and related partition lattices}, Adv. Math. \textbf{110} (1995),
  no.~2, 277--313.

\bibitem{BS-Charac}
Andreas Blass and Bruce~E. Sagan, \emph{Characteristic and {E}hrhart
  polynomials}, J. Algebraic Combin. \textbf{7} (1998), no.~2, 115--126.
  \MR{MR1609889 (99c:05204)}

\bibitem{DCP95}
C.~De~Concini and C.~Procesi, \emph{Wonderful models of subspace arrangements},
  Selecta Math. (N.S.) \textbf{1} (1995), no.~3, 459--494. \MR{MR1366622
  (97k:14013)}

\bibitem{DS-Massey}
Graham Denham and Alexander~I. Suciu, \emph{Moment-angle complexes, monomial
  ideals and {M}assey products}, Pure Appl. Math. Q. \textbf{3} (2007), no.~1,
  25--60. \MR{MR2330154}

\bibitem{FY-Formal}
E.~M. Feichtner and S.~Yuzvinsky, \emph{Formality of the complements of
  subspace arrangements with geometric lattices}, Zap. Nauchn. Sem.
  S.-Peterburg. Otdel. Mat. Inst. Steklov. (POMI) \textbf{326} (2005),
  no.~Teor. Predst. Din. Sist. Komb. i Algoritm. Metody. 13, 235--247, 284.
  \MR{MR2183223 (2007a:55021)}

\bibitem{GM88}
Mark Goresky and Robert MacPherson, \emph{Stratified {M}orse theory},
  Ergebnisse der Mathematik und ihrer Grenzgebiete (3) [Results in Mathematics
  and Related Areas (3)], vol.~14, Springer-Verlag, Berlin, 1988. \MR{MR932724
  (90d:57039)}

\bibitem{GT-supacehtpy}
Jelena Grbi{\'c} and Stephen Theriault, \emph{The homotopy type of the
  complement of a coordinate subspace arrangement}, Topology \textbf{46}
  (2007), no.~4, 357--396. \MR{MR2321037}

\bibitem{Helgason}
Thorkell Helgason, \emph{On geometric hypergraphs}, Proc. Second Chapel Hill
  Conf. on Combinatorial Mathematics and its Applications (Univ. North
  Carolina, Chapel Hill, N.C., 1970), Univ. North Carolina, Chapel Hill, N.C.,
  1970, pp.~276--284. \MR{MR0266811 (42 \#1714)}

\bibitem{Hult-link}
Axel Hultman, \emph{Link complexes of subspace arrangements}, European J.
  Combin. \textbf{28} (2007), no.~3, 781--790. \MR{MR2300759 (2007m:52029)}

\bibitem{Koz97}
Dmitry~N. Kozlov, \emph{General lexicographic shellability and orbit
  arrangements}, Ann. Comb. \textbf{1} (1997), no.~1, 67--90. \MR{MR1474801
  (98h:52023)}

\bibitem{Koz-hyper}
\bysame, \emph{A class of hypergraph arrangements with shellable intersection
  lattice}, J. Combin. Theory Ser. A \textbf{86} (1999), no.~1, 169--176.
  \MR{MR1682970 (2000e:52023)}

\bibitem{LiLi-geners}
Shuo-Yen~Robert Li and Wen Ch'ing~Winnie Li, \emph{Independence numbers of
  graphs and generators of ideals}, Combinatorica \textbf{1} (1981), no.~1,
  55--61. \MR{MR602416 (82h:05029)}

\bibitem{McCleary}
John McCleary, \emph{A user's guide to spectral sequences}, second ed.,
  Cambridge Studies in Advanced Mathematics, vol.~58, Cambridge University
  Press, Cambridge, 2001. \MR{MR1793722 (2002c:55027)}

\bibitem{OT}
Peter Orlik and Hiroaki Terao, \emph{Arrangements of hyperplanes}, Grundlehren
  der Mathematischen Wissenschaften [Fundamental Principles of Mathematical
  Sciences], vol. 300, Springer-Verlag, Berlin, 1992.

\bibitem{PRW-diag}
Irena Peeva, Vic Reiner, and Volkmar Welker, \emph{Cohomology of real diagonal
  subspace arrangements via resolutions}, Compositio Math. \textbf{117} (1999),
  no.~1, 99--115. \MR{MR1693007 (2001c:13021)}

\bibitem{Rota-mobius}
Gian-Carlo Rota, \emph{On the foundations of combinatorial theory. {I}.
  {T}heory of {M}\"obius functions}, Z. Wahrscheinlichkeitstheorie und Verw.
  Gebiete \textbf{2} (1964), 340--368 (1964). \MR{MR0174487 (30 \#4688)}

\bibitem{Sag-charfactor}
Bruce~E. Sagan, \emph{Why the characteristic polynomial factors}, Bull. Amer.
  Math. Soc. (N.S.) \textbf{36} (1999), no.~2, 113--133. \MR{MR1659875
  (2000a:06021)}

\bibitem{SW06}
Dev~P. Sinha and Ben Walter, \emph{Lie coalgebras and rational homotopy theory,
  {\sc i}}, {\tt arxiv:math.AT/0610437}.

\bibitem{Stan-comb}
Richard~P. Stanley, \emph{Enumerative combinatorics. {V}ol. 1}, Cambridge
  Studies in Advanced Mathematics, vol.~49, Cambridge University Press,
  Cambridge, 1997, With a foreword by Gian-Carlo Rota, Corrected reprint of the
  1986 original. \MR{MR1442260 (98a:05001)}

\bibitem{Yuz-SmalRat}
Sergey Yuzvinsky, \emph{Rational model of subspace complement on atomic
  complex}, Publ. Inst. Math. (Beograd) (N.S.) \textbf{66(80)} (1999),
  157--164, Geometric combinatorics (Kotor, 1998). \MR{MR1765044 (2002b:52026)}

\bibitem{Yuz-Small}
\bysame, \emph{Small rational model of subspace complement}, Trans. Amer. Math.
  Soc. \textbf{354} (2002), no.~5, 1921--1945 (electronic). \MR{MR1881024
  (2003a:52030)}

\bibitem{Zas-Chrom}
Thomas Zaslavsky, \emph{Chromatic invariants of signed graphs}, Discrete Math.
  \textbf{42} (1982), no.~2-3, 287--312. \MR{MR677061 (84h:05050b)}

\bibitem{Zas-Signedcolor}
\bysame, \emph{Signed graph coloring}, Discrete Math. \textbf{39} (1982),
  no.~2, 215--228. \MR{MR675866 (84h:05050a)}

\bibitem{Zas-char}
\bysame, \emph{The {M}\"obius function and the characteristic polynomial},
  Combinatorial geometries, Encyclopedia Math. Appl., vol.~29, Cambridge Univ.
  Press, Cambridge, 1987, pp.~114--138. \MR{MR921071}

\end{thebibliography}

\end{document}